\theoremstyle{plain}
\newtheorem{theorem}{Theorem}[section]
\newtheorem{lemma}[theorem]{Lemma}
\newtheorem{proposition}[theorem]{Proposition}
\newtheorem{conjecture}[theorem]{Conjecture}
\theoremstyle{definition}
\newtheorem{definition}[theorem]{Definition}
\newtheorem{remark}[theorem]{Remark}
\newtheorem{rk-def}[theorem]{Remark-Definition}
\newtheorem{example}[theorem]{Example}
\theoremstyle{remark}
\numberwithin{equation}{section}
\def\Jac{\mathrm{Jac}}
\def\JI{\mathcal{J}}
\def\Fitt{\mathbf{Fitt}}
\title{Mather-Yau's type theorem for higher Nash blowup algebras}
\author{Hong Duc Nguyen}
\address{TIMAS, Thang Long University, \newline \indent Nghiem Xuan Yem, Hanoi, Vietnam.} 
\email{duc.nh@thanglong.edu.vn}
\thanks{}
\keywords{Mather-Yau theorem, higher Nash blowups, higher Tjurina algebras, higher Jacobian ideals, singularity, contact equivalence}
\subjclass[2010]{13N10, 14BXX, 14J17, 32S05, 32S10}
\begin{document}
\begin{abstract}
%When generalizing \sout{Yasuda's} higher Nash blowup to that of any morphism of varieties it arises a standard sheaf of ideals which plays the role of a center of an isomorphic blowup. In this article, working with regular functions on smooth varieties, we develop some essential relations of this sheaf of ideals with contact equivalence of complex analytic germs and motivic zeta functions. 
%
%[[A more specific version?]]
In this paper, we establish a Mather-Yau theorem for higher Nash blowup algebras, demonstrating that the isomorphism type of the local ring of any hypersurface singularity, defined over an arbitrary field, is fully determined by its higher Nash blowup algebras. The classical Mather-Yau theorem (1982) asserts that for isolated complex hypersurface singularities, the isomorphism type of the local ring is determined by the Tjurina algebra. In positive characteristic, this result was extended by considering the higher Tjurina algebras by Greuel and Pham (2017) under the assumptions of an algebraically closed ground field and isolated singularities. Our work begins by proving the stability of higher Nash blowup algebras under contact equivalence in a very general framework. Specifically, we show that the higher Nash blowup algebras of any system of elements in an analytic or geometric ring remain invariant under contact equivalence. For complex hypersurface singularities, this stability was conjectured by Hussain, Ma, Yau, and Zuo, and was recently verified by Le and Yasuda. Finally, the converse is established using a classical result of Samuel (1956).
\end{abstract}
\maketitle

\section{Introduction}
We study in this paper the higher Nash blowup algebras. The study of  higher Nash blowup algebras was motivaed by the notion of  higher Nash blowup, which was introduced by by Yasuda \cite{Yas07} to study the problem of desingularization of varieties.  They are closely related to the higher Jacobian matrices and Jacobian ideals, which are fundamental objects in the study of singularities of varieties as well as singularities of morphisms \cite{BJNB19,Dua17,DNB22}. 

In their paper \cite{HMYZ23}, Hussain-Ma-Yau-Zuo proposed a conjecture predicting that the higher Nash blowup algebras $\mathcal T_n(f)$ of an isolated complex hypersurface singularity $f$ are contact invariants. They verifed this for plane curve singularities and $n=2$, see \cite[Theorem A]{HMYZ23}. The conjecture was proved recently by Le-Yasuda in \cite{LY24} for (non-isolated) complex hypersurface singularities. The approach in \cite{LY24} demonstrates the conjecture through detailed computations of higher Jacobian ideals, highlighting the difficulty of addressing the problem in full generality. In this paper, we overcome these challenges by developing a more general framework, proving the conjecture for cases where $f$ is a system of elements in an analytic or geometric ring  $R$. More precisely, let $R$ be an analytic or geometric  ring (see Definition \ref{a-g-rings}), and let $f=(f_1,\cdots,f_s)$ be a system of elements in $R$. We prove that the $n$-th Nash blowup algebra $\mathcal T_n(f)$ of $f$ is invariant under contact equivalence (Theorem \ref{main1}). %Our proof is straightforward and based on the 

Motivated by the Mather-Yau theorem, we explore the inverse of the conjecture, which asks whether the contact equivalence class of $f$ can be uniquely determined by its higher Nash blowup algebras. For hypersurface singularities, we provide an affirmative answer to this question. The classical Mather-Yau theorem \cite{MaY82} establishes that for isolated complex hypersurface singularities, the isomorphism type of its local ring is determined by its Tjurina algebra. However, this result does not hold in positive characteristic as pointed out by Mather-Yau. Greuel-Pham \cite{GrP17} and Kerner \cite{Ker22} addressed this limitation by proving that the isomorphism type of the local ring of an isolated hypersurface singularity over a field of positive characteristic can be determined by sufficiently high Tjurina algebras. The analogous question for non-isolated hypersurface singularities remains unresolved. 

In this paper, we extend this investigation by replacing higher Tjurina algebras with higher Nash blowup algebras. We demonstrate that the isomorphism type of the local ring of a hypersurface singularity $f$ is uniquely determined by its higher Nash blowup algebra of degree at least 2, without imposing any assumptions on $f$ or the ground field (Theorem \ref{main2}).

\section{Higher Jacobian ideals and contact equivalence}

%\subsection{Higher Nash blowups}
\subsection{Higher Jacobian ideals}\label{Sec2.1}
We recall in this section the notion of higher jacobian ideals of a system of elements of an analytic or geometric $\Bbbk$-algebra. 
\begin{definition}\label{a-g-rings}
A $\Bbbk$-algebra $R$ is called  is called {\it geometric} if it is either of finite type $\Bbbk$-algebra, or a localization or a completion of a finite type $\Bbbk$-algebra. If the field $\Bbbk$ admits a non-trivial valuation, then we denote by $\Bbbk\{\bf x\}$ the ring of convergent power series, where ${\bf x}=(x_1,\ldots,x_d)$. We call $R$ {\it an analytic} ring if it is isomorphic to a quotient of  the ring $\Bbbk\{\bf x\}$.
\end{definition}
From now on, we always assume that $R$ is either analytic or geometric. Let ${\bf t}=(t_1,\ldots,t_s)$ and $S$ be either $\Bbbk[{\bf t}]$ or $\Bbbk\{\bf t\}$ or $\Bbbk[\![\bf t]\!]$. Let $f=(f_1,\ldots,f_s)$ be a system of non-zero divisors element of an analytic or geometric $\Bbbk$-algebra $R$ such that the corresponding $t_i\mapsto f_i$ defines a morphism of $\Bbbk$-algebras. We denote by $R\otimes_f R$ the tensor product over $S$ and consider it as $R$-module via the map
$$\delta_f: R\to R\otimes_f R, \alpha\mapsto \alpha\otimes_f 1.$$ 
Let $\rho_f\colon  R\times_f R\to R$ defined as $\rho_f(\alpha\otimes \beta)=\alpha\beta$ and let $\mathcal I_f$ be the kernel of $\rho_f$. For any $n\in \mathbb N^*$, we define the {\it $R$-module of K\"ahler differentials of order $n$} of $f$ by
$$\Omega_f^{(n)}:=\mathcal I_f/_{\mathcal I_f^{n+1}}.$$ 
\begin{lemma}
Assume that $R$ is analytic or geometric. Then the $R$-module $\Omega_f^{(n)}$ is finite.
\end{lemma}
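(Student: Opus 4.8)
The plan is to reduce the finiteness of $\Omega_f^{(n)}=\mathcal I_f/\mathcal I_f^{n+1}$ to the finiteness of the first-order differential module $\Omega_{R/S}:=\mathcal I_f/\mathcal I_f^{2}$, and then to compute the latter explicitly. For the first reduction I would invoke two standard structural facts about the ring $R\otimes_f R$ and its diagonal ideal $\mathcal I_f$. First, the chain of ideals
\[
\mathcal I_f^{n+1}\subseteq \mathcal I_f^{n}\subseteq\cdots\subseteq \mathcal I_f^{2}\subseteq \mathcal I_f
\]
induces a finite filtration of $\Omega_f^{(n)}$ whose successive quotients are the $R$-modules $\mathcal I_f^{i}/\mathcal I_f^{i+1}$ for $i=1,\dots,n$; since a module carrying a finite filtration with finite quotients is itself finite, it suffices to prove that each $\mathcal I_f^{i}/\mathcal I_f^{i+1}$ is a finite $R$-module. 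Second, for an arbitrary ideal there is a canonical surjection of graded $R$-algebras $\mathrm{Sym}_R^{\bullet}(\mathcal I_f/\mathcal I_f^2)\twoheadrightarrow \bigoplus_{i\ge 0}\mathcal I_f^{i}/\mathcal I_f^{i+1}$, so that $\mathcal I_f^{i}/\mathcal I_f^{i+1}$ is a quotient of $\mathrm{Sym}_R^{i}(\Omega_{R/S})$. Hence, once $\Omega_{R/S}$ is known to be a finite $R$-module, all of its symmetric powers are finite, and the reduction is complete.

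It then remains to prove that $\Omega_{R/S}$ is finite. Here I would exploit that $R$ is, by hypothesis, a quotient $R=P/J$ of one of the distinguished regular rings $P\in\{\Bbbk[\x],\ \Bbbk[\x]_{\mathfrak m},\ \Bbbk[\![\x]\!],\ \Bbbk\{\x\}\}$ with $\x=(x_1,\dots,x_d)$. Lifting each $f_i$ to some $\tilde f_i\in P$ (choosing the lift inside the maximal ideal in the local, formal, and analytic cases) extends the structure morphism to a map $S\to P$, and the second fundamental exact sequence
\[
J/J^2\longrightarrow \Omega_{P/S}\otimes_P R\longrightarrow \Omega_{R/S}\longrightarrow 0
\]
shows that $\Omega_{R/S}$ is a quotient of $\Omega_{P/S}\otimes_P R$. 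It therefore suffices to check that $\Omega_{P/S}$ is finite over $P$. A direct computation identifies $\Omega_{P/S}$ with the cokernel of the Jacobian map $P^{s}\to\bigoplus_{i=1}^d P\,dx_i$ sending the $j$-th basis vector to $d\tilde f_j=\sum_i (\partial \tilde f_j/\partial x_i)\, dx_i$; this is a quotient of the finite free module $\bigoplus_{i=1}^d P\,dx_i$, hence $\Omega_{P/S}$ is finite and the lemma follows.

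The main subtlety, and the step I expect to require the most care, is the formal and analytic case. There the ordinary module of Kähler differentials of $P$ over $\Bbbk$ is far from finite, so both the tensor product $R\otimes_f R$ and the resulting differential modules must be understood in the completed (respectively convergent) sense, that is, as universally finite differential modules. The point to verify is that, under this interpretation, $\Omega_{P/S}$ is still the free module $\bigoplus_{i=1}^d P\,dx_i$ modulo the differentials $d\tilde f_j$, which is finite over $P$; this is precisely the content of the theory of universally finite differentials for complete and analytic $\Bbbk$-algebras. Granting this, the symmetric-algebra surjection and the second fundamental exact sequence remain valid verbatim in the completed setting, so the two reductions above carry over without change and the proof goes through uniformly in all four cases.
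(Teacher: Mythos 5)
Your proof is correct in substance, but it takes a genuinely different route from the paper's. The paper's argument is a one-step direct computation: writing $R$ as (a localization or completion of) $\Bbbk[\mathbf{x}]/I$, it observes that the diagonal ideal is $\mathcal I_f=(x_i\otimes 1-1\otimes x_i\mid i=1,\ldots,d)$, and concludes at once that $\Omega_f^{(n)}=\mathcal I_f/\mathcal I_f^{n+1}$ is generated as an $R$-module by the finitely many monomials $(\mathbf{x}\otimes 1-1\otimes \mathbf{x})^\alpha$ with $1\leq|\alpha|\leq n$. You instead filter $\Omega_f^{(n)}$ by the powers of $\mathcal I_f$, use the canonical surjection $\mathrm{Sym}^i_R(\mathcal I_f/\mathcal I_f^2)\twoheadrightarrow \mathcal I_f^i/\mathcal I_f^{i+1}$ to reduce everything to the first-order module, and then invoke the conormal sequence and the Jacobian presentation of $\Omega_{P/S}$ for the ambient regular ring $P$. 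Both are valid: the paper's proof is shorter and has the side benefit of producing an explicit monomial generating set, which is what connects $\Omega_f^{(n)}$ to the higher Jacobian matrix $\Jac_n(f)$ later in Section \ref{Sec2.1}; yours is more structural, isolating exactly where finiteness comes from (the first-order differentials of the ambient ring) and letting standard machinery do the bookkeeping.

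One further point in your favor: the subtlety you flag in your last paragraph is real, and it affects the paper's proof just as much as yours. If $R\otimes_f R$ is taken literally as the ordinary tensor product over $S$, then for $R$ a completion or an analytic ring the claim that $\mathcal I_f$ is generated by the $d$ elements $x_i\otimes 1-1\otimes x_i$ fails --- indeed $\mathcal I_f/\mathcal I_f^2$ would then be the ordinary module of K\"ahler differentials, which is not finitely generated over $\Bbbk[\![\mathbf{x}]\!]$, so the lemma itself would be false. Both proofs therefore tacitly require the completed (resp.\ convergent) tensor product in those cases; the paper buries this under ``it is easily seen,'' whereas you state it explicitly and correctly identify it as the content of the theory of universally finite differentials.
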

\begin{proof}
We prove for the geometric case since the analytic case is similar. Assume that $R$ is a localization or a completion of the ring $\Bbbk[{\bf x}]/I$. Then it is easily seen that
$$\mathcal I_f=\left((x_i\otimes 1-1\otimes x_i\mid i=1,\ldots,d\right)$$
and hence  the $R$-module $\Omega_f^{(n)}$ is generated by the elements $({\bf x}\otimes 1-1\otimes {\bf x})^\alpha$ with $1\leq |\alpha|\leq n$, where
$$({\bf x}\otimes 1-1\otimes {\bf x})^\alpha:=(x_1\otimes 1-1\otimes x_1)^{\alpha_1}\ldots (x_d\otimes 1-1\otimes x_d)^{\alpha_d} \text{ and } |\alpha|=\alpha_1+\ldots+\alpha_d.$$ 
\end{proof}
Let ${M}$ be any finite ${R}$-module. Choose a presentation
$$
\bigoplus_{j \in J} R \longrightarrow R^{\oplus n} \longrightarrow M \longrightarrow 0
$$
of $M$. Let $A=\left(a_{i j}\right)_{i=1, \ldots, n, j \in J}$ be the matrix of the map $\bigoplus_{j \in J} R \rightarrow R^{\oplus n}$. The $k$-th Fitting ideal of $M$ is the ideal $\Fitt_k(M)$ generated by the $(n-k) \times(n-k)$ minors of $A$, which is independent of the choice of the presentation.
\begin{definition}\label{higherjacobian}
Let $R$ be an analytic or geometric $\Bbbk$-algebra and let $e=\dim R/_{(f)}, r={{n+e}\choose{e}}-1$, where $(f)$ is the ideal generated by $f_1,\ldots,f_s$. Then the $r$-th Fitting ideal $\Fitt_r(\Omega_f^{(n)})$ of the $R$-module $\Omega_f^{(n)}$ is called the {\it $n$-th Jacobian ideal of $f$} and denoted by $\JI_n(f)$.	
\end{definition}

Let us give an explicit description of $\JI_n(f)$ in the case when $f$ is an element of 
$R=\Bbbk[\mathbf{x}]$ or $\Bbbk\{\mathbf{x}\}$ or $\Bbbk[\![\mathbf{x}]\!]$, where $\mathbf{x}=(x_1,\dots,x_d)$. To an $f\in R$ and an $n\in \mathbb N^*$ we associate a matrix described as follows
$$\Jac_n(f):=\left(r_{\beta,\alpha}\right)_{0\leq |\beta|\leq n-1, 1\leq |\alpha|\leq n},$$
where the ordering for row and column indices is graded lexicographical, 
\begin{equation}\label{matrixJac}
r_{\beta,\alpha}=r_{\beta,\alpha}(f):=
\begin{cases}
0 & \text{if}\ \ \alpha_i<\beta_i \ \ \text{for some}\ 1\leq i\leq d\\
0 & \text{if} \ \ \alpha=\beta\\
\frac{\partial^{\alpha-\beta}f}{(\alpha-\beta)!} & \text{if}\ \ \alpha> \beta,
\end{cases}
\end{equation}
where $\alpha> \beta$ if $\alpha\neq \beta$ and $\alpha_i\geq\beta_i$ for all $i$. Clearly, $\Jac_n(f)$ is a matrix of type ${{d-1+n}\choose{d}}\times \big({{d+n}\choose{d}}-1\big)$ with entries in $R$.

\begin{definition}\label{bdef}
Let $R=\Bbbk[\mathbf{x}]$, $\Bbbk\{\mathbf{x}\}$ or $\Bbbk[\![\mathbf{x}]\!]$. For $f\in R$, the matrix $\Jac_n(f)$ is called the {\it Jacobian matrix of order $n$ of $f$}, or the {\it $n$-th Jacobian matrix of $f$}. 
\end{definition} 

\begin{remark}\label{versions-Jac}
There are a few versions of higher Jacobian matrix which are slightly different one another. Our definition above follows the one adopted in \cite{BD20}. Another version considered in \cite{Dua17,HMYZ23} differs in that  the diagonal entries \(r_{\alpha,\alpha}\) are \(f\) instead of \(0\). These two versions coincide modulo \(f\).  The one in \cite{BJNB19}, which the authors call the {\it Jacobi-Taylor matrix}, has one extra column by allowing \(|\alpha|=0\). 
\end{remark}

\begin{proposition}\cite[Proposition 2.15]{LY24}\label{relmatrix}
Let $R=\Bbbk[\mathbf{x}]$, $\Bbbk\{\mathbf{x}\}$ or $\Bbbk[\![\mathbf{x}]\!]$. The $n$-th Jacobian ideal $\JI_n(f)$ of $f$  is generated by all the maximal minors of the matrix $\Jac_n(f)$.
\end{proposition}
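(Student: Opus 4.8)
The plan is to compute $\Omega_f^{(n)}$ explicitly in local coordinates, exhibit a finite free presentation of it whose matrix is the transpose of $\Jac_n(f)$, and then deduce the statement from two elementary facts: a matrix and its transpose have the same maximal minors, and the Fitting index $r$ matches the number of rows of $\Jac_n(f)$.

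First I would make the tensor product concrete. Writing a second copy of the coordinates as $\mathbf{y}=(y_1,\dots,y_d)$ and setting $u_i:=1\otimes x_i-x_i\otimes 1=y_i-x_i$, the preceding lemma gives $\mathcal I_f=(u_1,\dots,u_d)$ and
$$R\otimes_f R\cong \Bbbk[\mathbf{x},\mathbf{u}]/(g),\qquad g:=f(\mathbf{x}+\mathbf{u})-f(\mathbf{x}),$$
with the corresponding complete tensor product in the convergent and formal cases (which does not affect the finite module $\Omega_f^{(n)}$). The divided-power (Hasse) Taylor expansion gives $g=\sum_{|\alpha|\ge 1}\frac{\partial^\alpha f}{\alpha!}\,\mathbf{u}^\alpha$, valid over any field, and these are exactly the entries appearing in $\Jac_n(f)$.

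Next I would pass to $B:=\Bbbk[\mathbf{x},\mathbf{u}]/(\mathbf{u})^{n+1}$, which is free over $R$ on the monomials $\{\mathbf{u}^\gamma:0\le|\gamma|\le n\}$. Reduction modulo $(\mathbf{u})^{n+1}$ carries $\mathcal I_f$ onto the free $R$-submodule $\overline{(\mathbf{u})}$ on $\{\mathbf{u}^\gamma:1\le|\gamma|\le n\}$, of rank $N:=\binom{n+d}{d}-1$, and carries $\mathcal I_f^{n+1}$ onto the $R$-submodule $B\bar g$ generated by $\{\mathbf{u}^\beta\bar g:0\le|\beta|\le n-1\}$ (the products with $|\beta|\ge n$ vanishing in $B$). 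This yields
$$\Omega_f^{(n)}=\frac{\mathcal I_f}{\mathcal I_f^{n+1}}\cong\frac{\overline{(\mathbf{u})}}{B\bar g}.$$
Reading off the coefficient of $\mathbf{u}^\gamma$ in $\mathbf{u}^\beta\bar g$ gives $\frac{\partial^{\gamma-\beta}f}{(\gamma-\beta)!}$ when $\gamma>\beta$ and $0$ otherwise, so the presentation
$$R^{\binom{n+d-1}{d}}\xrightarrow{\ A\ }R^{N}\longrightarrow\Omega_f^{(n)}\longrightarrow 0$$
has matrix $A=\Jac_n(f)^{\top}$.

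Finally I would handle the index bookkeeping and conclude. For a single non-unit $f$ one has $e=\dim R/(f)=d-1$, so $r=\binom{n+e}{e}-1=\binom{n+d-1}{d-1}-1$, and Pascal's identity gives $N-r=\binom{n+d}{d}-\binom{n+d-1}{d-1}=\binom{n+d-1}{d}$, which is exactly the number of columns of $A$, equivalently the number of rows of $\Jac_n(f)$. Hence $\Fitt_r(\Omega_f^{(n)})$ is generated by the $(N-r)\times(N-r)$ minors of $A$, that is by its maximal minors; as these coincide with the maximal minors of $A^{\top}=\Jac_n(f)$, we obtain that $\JI_n(f)=\Fitt_r(\Omega_f^{(n)})$ is generated by the maximal minors of $\Jac_n(f)$, as claimed. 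The step I expect to require the most care is the second one: verifying cleanly that reduction modulo $(\mathbf{u})^{n+1}$ identifies $\mathcal I_f/\mathcal I_f^{n+1}$ with $\overline{(\mathbf{u})}/B\bar g$, and that this description is uniform across the polynomial, convergent, and formal settings.
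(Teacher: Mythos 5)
The paper never proves this proposition itself---it is imported verbatim from \cite[Proposition 2.15]{LY24}---so there is no internal argument to compare yours against; what you have written is, in effect, the proof the paper omits, and it is correct. It is also the natural argument (and, in substance, the one behind the cited result): identify $R\otimes_f R$ with $\Bbbk[\mathbf{x},\mathbf{u}]/(g)$ for $g=f(\mathbf{x}+\mathbf{u})-f(\mathbf{x})$, read off from the Hasse--Taylor expansion of $g$ a free presentation of $\Omega_f^{(n)}$ with matrix $\Jac_n(f)^{\top}$, and match the Fitting index $r=\binom{n+d-1}{d-1}-1$ to the maximal-minor size via Pascal's identity; your rank and index bookkeeping checks out, as do the identification $\Omega_f^{(n)}\cong\overline{(\mathbf{u})}/B\bar{g}$ (third isomorphism theorem), the zero diagonal entries (matching the paper's convention from \cite{BD20} rather than the variant with $f$ on the diagonal), and the invariance of maximal minors under transposition. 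The one assertion you leave unproved is the parenthetical claim that passing to completed tensor products in the formal and convergent cases does not change $\Omega_f^{(n)}$: with the ordinary tensor product that the paper literally defines, both the equality $\mathcal I_f=(u_1,\dots,u_d)$ and the existence of a Taylor expansion with remainder inside $R\otimes_f R$ are genuinely delicate for power series rings; however, this ambiguity is inherited from the paper's own unnumbered finiteness lemma (where the same facts are declared ``easily seen''), and under the completed-tensor-product reading---the only one in which the paper's definitions behave as stated---your argument is complete.
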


\begin{proposition}\cite[Proposition 2.19]{LY24}\label{inclusion}
Let $R=\Bbbk[\mathbf{x}]$, $\Bbbk\{\mathbf{x}\}$ or $\Bbbk[\![\mathbf{x}]\!]$. We have the inclusions
$$\mathcal{J}_n(f)\subseteq \mathcal{J}_1(f){ }^{\binom{d-2+n}{d-1}}.$$
 In particular, $\mathcal{J}_n(f) \subseteq\mathcal{J}_1(f)^3$, if either
\begin{itemize}
\item $d \geq 3$ and $n \geq 2$, or
\item $ d=2$ and $n \geq 3$.
\end{itemize}
\end{proposition}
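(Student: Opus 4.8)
The plan is to read the bound directly off the shape of the matrix $\Jac_n(f)$, using Proposition \ref{relmatrix} to replace the ideal $\mathcal{J}_n(f)$ by the ideal generated by the maximal minors of $\Jac_n(f)$. Since $\Jac_n(f)$ has $\binom{d-1+n}{d}$ rows and $\binom{d+n}{d}-1\ge\binom{d-1+n}{d}$ columns (by Pascal's identity, as $\binom{d+n}{d}=\binom{d-1+n}{d}+\binom{d-1+n}{d-1}$), every maximal minor is the determinant of a square submatrix obtained by selecting a full set of columns, and hence involves exactly one entry from \emph{every} row. The key point is then to analyze the top-degree rows $r_{\beta,\bullet}$, namely those with $|\beta|=n-1$.

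First I would record the behavior of the entries in these top rows. If $|\beta|=n-1$, then $r_{\beta,\alpha}=\partial^{\alpha-\beta}f/(\alpha-\beta)!$ is nonzero only when $\alpha\ge\beta$ and $1\le|\alpha|\le n$, which forces $|\alpha-\beta|\le 1$; since $\alpha=\beta$ contributes $0$, the only surviving entries occur for $\alpha=\beta+e_i$ (with $e_i$ the $i$-th standard basis vector), and there $r_{\beta,\beta+e_i}=\partial f/\partial x_i$ because $e_i!=1$. Thus each row with $|\beta|=n-1$ has every entry either equal to $0$ or equal to a first-order partial derivative, hence lying in $\mathcal{J}_1(f)=(\partial f/\partial x_1,\dots,\partial f/\partial x_d)$. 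Next I would count these rows: the number of $\beta\in\mathbb{N}^d$ with $|\beta|=n-1$ is $\binom{(n-1)+(d-1)}{d-1}=\binom{d-2+n}{d-1}$, which is exactly the exponent appearing in the statement.

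Finally I would expand each maximal minor by the Leibniz formula as a signed sum of products, each product using exactly one entry from each row. In any nonzero such product the $\binom{d-2+n}{d-1}$ factors coming from the top-degree rows each lie in $\mathcal{J}_1(f)$, so every term, and therefore the whole minor, lies in $\mathcal{J}_1(f)^{\binom{d-2+n}{d-1}}$. As these minors generate $\mathcal{J}_n(f)$ by Proposition \ref{relmatrix}, the inclusion follows. The two ``in particular'' cases then reduce to the elementary estimates $\binom{d-2+n}{d-1}\ge\binom{d}{d-1}=d\ge 3$ for $d\ge 3,\ n\ge 2$, and $\binom{n}{1}=n\ge 3$ for $d=2,\ n\ge 3$. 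I do not anticipate a real obstacle: the argument rests entirely on the single observation that the highest-degree rows are assembled only from first-order partials, so the only care required is the combinatorial bookkeeping that matches the count of such rows to the exponent, together with the remark that a maximal minor, being square of full row size, necessarily involves all of them.
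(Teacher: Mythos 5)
Your proof is correct. Note that this paper does not actually prove the proposition---it is quoted from \cite[Proposition 2.19]{LY24}---so there is no internal proof to compare against; your argument (identifying $\mathcal{J}_1(f)$ with the ideal of first-order partials via Proposition \ref{relmatrix}, observing that the $\binom{d-2+n}{d-1}$ rows with $|\beta|=n-1$ have all entries either $0$ or a first-order partial, and expanding each maximal minor by Leibniz so that every term picks up one factor from each such row) is sound, complete including the two ``in particular'' cases, and is the natural approach taken in the cited reference.
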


%%%%%%%%%%%%%%%%%%%%%%%%%%%%%%%%%%%%%%%%%%%%%%%%%%%%%%%%%%%%%%%%%%%%%%%%%%%%%%%%%%%%%%%%%%%%%%%%%%%%%%%%%%%%%%%%%%%%%%%%%%%%%%%%%%%%%%%%%%%%%%%%%%%
\subsection{Higher Nash blowup algebras and contact equivalence}\label{Sec2.2}
Let $f$ be a systerm of regular elements in $R$. The $n$-th Jacobian  and Nash blowup local algebra of $f$ is defined respectively as
\begin{align*}
\mathcal M_n(f) &:=R/_{\JI_n(f)}\\
\mathcal T_n(f)&:= R/_{(f)+ \JI_n(f)}.
\end{align*} 
In the following we will show that these algebras are stable under right and contact equivalence respectively. Recall that two systems $f=(f_1,\ldots,f_s)$ and $g=(g_1,\ldots,g_s)$ of elements in $R$ are called {\it right equivalent}, denoted by $f\sim_r g$, if there is an automorphism $\varphi$ of $\Bbbk$-algebra of $R$ such that $g_i=\varphi(f_i)$. They are called {\it contact equivalent}, denoted by $f\sim_c g$, if there are in addition units $u_1,\ldots,u_s\in R$ such that $g_i=u_i\varphi(f_i)$. 

In their paper \cite{HMYZ23}, Hussain-Ma-Yau-Zuo proposed the following conjecture, and proved its for $d=n=2$, see \cite[Theorem A]{HMYZ23}.
\begin{conjecture}[\cite{HMYZ23}, Conjecture 1.5]\label{conjj}
Let $f$ and $g$ be in $\mathbb C\{{\bf x}\}$ with $f(\mathbf 0)=g(\mathbf 0)=0$. If $f$ is contact equivalent to $g$ at $\mathbf 0$, then $\mathbb C\{{\bf x}\}/_{(f)+\JI_n(f)}$ is isomorphic to $\mathbb C\{{\bf x}\}/_{(g)+\JI_n(g)}$ as $\mathbb C$-algebras for any $n\in \mathbb N^*$.	
\end{conjecture}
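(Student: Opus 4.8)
The plan is to write the contact equivalence as $g=u\,\varphi(f)$, with $\varphi$ a $\mathbb{C}$-algebra automorphism of $R=\mathbb{C}\{\mathbf{x}\}$ and $u\in R^\times$, and to prove the two separate invariance statements
\[
\varphi\big(\JI_n(f)\big)=\JI_n\big(\varphi(f)\big),
\qquad
(h)+\JI_n(uh)=(h)+\JI_n(h)\ \ \text{for every }h\in R,\ u\in R^\times .
\]
Granting these, set $h=\varphi(f)$; since $u$ is a unit we have $(g)=(uh)=(h)=\varphi\big((f)\big)$ and $\JI_n(g)=\JI_n(uh)$, so that
\[
(g)+\JI_n(g)=(h)+\JI_n(uh)=(h)+\JI_n(h)=\varphi\big((f)\big)+\varphi\big(\JI_n(f)\big)=\varphi\big((f)+\JI_n(f)\big).
\]
Because $\varphi$ is an automorphism of $R$, it then descends to an isomorphism $\mathcal{T}_n(f)=R/\big((f)+\JI_n(f)\big)\xrightarrow{\ \sim\ }R/\big((g)+\JI_n(g)\big)=\mathcal{T}_n(g)$, which is exactly the assertion.

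For the unit statement I would argue at the level of the Jacobian matrix. By Proposition \ref{relmatrix}, $\JI_n(uh)$ is generated by the maximal minors of $\Jac_n(uh)$; I pass to the variant $\widetilde{\Jac}_n$ with diagonal entries equal to the function (Remark \ref{versions-Jac}), which produces the same ideal modulo $(h)$. The Leibniz rule $\partial^{\gamma}(uh)/\gamma!=\sum_{0\le\delta\le\gamma}\big(\partial^{\delta}u/\delta!\big)\big(\partial^{\gamma-\delta}h/(\gamma-\delta)!\big)$ then gives the exact factorization $\widetilde{\Jac}_n(uh)=U\cdot\widetilde{\Jac}_n(h)$, where $U=\big(\partial^{\beta'-\beta}u/(\beta'-\beta)!\big)_{\beta,\beta'}$ is a square matrix, upper triangular in the graded-lexicographic order, with every diagonal entry equal to the unit $u$, hence $\det U=u^{\binom{d-1+n}{d}}\in R^\times$. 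Left multiplication by $U$ multiplies each maximal minor by $\det U$, so $\widetilde{\Jac}_n(uh)$ and $\widetilde{\Jac}_n(h)$ have the same ideal of maximal minors; reducing modulo $(h)$ yields $(h)+\JI_n(uh)=(h)+\JI_n(h)$.

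For the automorphism statement I would avoid the higher chain rule entirely and use the intrinsic definition $\JI_n(f)=\Fitt_r\big(\Omega_f^{(n)}\big)$. The module $\Omega_f^{(n)}=\mathcal I_f/\mathcal I_f^{\,n+1}$ is built functorially from the $S$-algebra structure $t_i\mapsto f_i$ on $R$: the automorphism $\varphi$ carries $t_i\mapsto f_i$ to $t_i\mapsto\varphi(f_i)$, and $\varphi\otimes\varphi$ descends to an isomorphism $R\otimes_f R\xrightarrow{\ \sim\ }R\otimes_{\varphi(f)}R$ compatible with the multiplication maps $\rho$. Hence it sends $\mathcal I_f$ to $\mathcal I_{\varphi(f)}$ and $\mathcal I_f^{\,n+1}$ to $\mathcal I_{\varphi(f)}^{\,n+1}$, inducing a $\varphi$-compatible isomorphism $\Omega_f^{(n)}\xrightarrow{\ \sim\ }\Omega_{\varphi(f)}^{(n)}$. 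Applying $\varphi$ to the entries of a presentation matrix of $\Omega_f^{(n)}$ produces a presentation matrix of $\Omega_{\varphi(f)}^{(n)}$, and the corresponding minors transform by $\varphi$; therefore $\Fitt_r\big(\Omega_{\varphi(f)}^{(n)}\big)=\varphi\big(\Fitt_r(\Omega_f^{(n)})\big)$, which is the claim.

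The step I expect to be the main obstacle is precisely this automorphism (coordinate-change) invariance: carried out directly on $\Jac_n$ it forces a Fa\`a-di-Bruno--type higher chain rule together with a proof that the resulting transformation matrix is invertible, and this is the computation that made the general case delicate in \cite{LY24}. The conceptual gain of the Fitting-ideal formulation is that this step collapses to pure functoriality, the only genuine bookkeeping being the verification that $\varphi\otimes\varphi$ respects the defining ideal of the tensor product over $S$ and the ideals $\mathcal I_f^{\,n+1}$. For the hypersurface statement one may keep everything at the matrix level, but the functorial argument is what I expect to transfer verbatim to an arbitrary system $f=(f_1,\dots,f_s)$ of non-zero-divisors in any analytic or geometric ring, giving the stronger invariance asserted in the general framework.
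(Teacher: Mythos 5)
Your overall strategy --- splitting contact equivalence into an automorphism step and a unit step --- is exactly the paper's (Theorem \ref{main1} via Lemmas \ref{local-lem1} and \ref{local-lem3}), and your automorphism step is essentially the paper's Lemma \ref{local-lem1} verbatim: functoriality of $\Omega_f^{(n)}$ under $\varphi\otimes\varphi$ plus compatibility of Fitting ideals with base change. Where you diverge is the unit step. The paper treats $g=uf$ functorially as well (Lemma \ref{local-lem3}): it base-changes to $R'=R/(f)=R/(g)$, identifies both $(R\otimes_f R)\otimes_R R'$ and $(R\otimes_g R)\otimes_R R'$ with $R\otimes_\Bbbk R/(f\otimes 1,1\otimes f)$, deduces $\Omega_f^{(n)}\otimes_R R'\cong\Omega_g^{(n)}\otimes_R R'$, and again invokes base change for Fitting ideals. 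Your matrix-level Leibniz argument is closer in spirit to the computations of \cite{LY24}; it suffices for the hypersurface conjecture at hand, but, as you yourself note, it does not extend to systems of elements in general analytic or geometric rings, which is precisely the extra generality the paper's functorial lemma buys.

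There is one genuine error to repair: the factorization $\widetilde{\Jac}_n(uh)=U\cdot\widetilde{\Jac}_n(h)$ is \emph{not} exact. Writing $D^{(\gamma)}=\partial^\gamma/\gamma!$, the Leibniz expansion of the $(\beta,\alpha)$ entry of $\widetilde{\Jac}_n(uh)$ is $\sum_{\beta\le\beta'\le\alpha}D^{(\beta'-\beta)}(u)\,D^{(\alpha-\beta')}(h)$, whereas the matrix product $U\cdot\widetilde{\Jac}_n(h)$ only collects the terms with $|\beta'|\le n-1$, these being the row indices of $\widetilde{\Jac}_n(h)$. When $|\alpha|=n$ the term $\beta'=\alpha$ escapes the sum and contributes $D^{(\alpha-\beta)}(u)\cdot h$. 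The correct identity is therefore
\begin{equation*}
\widetilde{\Jac}_n(uh)=U\cdot\widetilde{\Jac}_n(h)+hE
\end{equation*}
for a suitable matrix $E$ supported on the columns with $|\alpha|=n$. Consequently your intermediate claim that $\widetilde{\Jac}_n(uh)$ and $\widetilde{\Jac}_n(h)$ have the same ideal of maximal minors \emph{in $R$} is unjustified (and false in general); what is true is that the two ideals agree modulo $(h)$, since minors are polynomial in the entries and the matrices are congruent mod $(h)$. This is harmless for your conclusion, because you reduce modulo $(h)$ at the end anyway and still obtain $(h)+\JI_n(uh)=(h)+\JI_n(h)$; but the words ``exact factorization'' and the unreduced equality of minor ideals should be replaced throughout by congruences modulo $(h)$.
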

This conjecture was recently proved by Le-Yasuda in \cite[Theorem 2.26]{LY24} without the assumption that $f$ and $g$ have an isolated singularity at the origin. In the following we prove the conjecture in very general sense. Namely, the ground field is arbitrary, the ring $R$ is analytic or geometric (see Section \ref{Sec2.1}), and $f$ and $g$ are two systems of elements in $R$, not just hypersurfaces. The converse is shown to hold true for hypersurface singularities in the next section (Theorem \ref{main2}).

\begin{theorem}\label{main1}
Assume that $R$ is analytic or geometric. Let $f,g$ be two systems of $s$ regular elements in $R$. 
\begin{itemize}
	\item[(i)] If $f$ and $g$ are right equivalent, then $\mathcal M_n(f)\cong \mathcal M_n(g).$	
	\item[(ii)] If $f$ and $g$ are contact equivalent, then $\mathcal T_n(f)\cong \mathcal T_n(g).$
\end{itemize} 
\end{theorem}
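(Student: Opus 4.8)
The key invariant to establish is that the higher Jacobian ideals behave well under automorphisms and under multiplication by units. Since $\mathcal{M}_n(f) = R/\mathcal{J}_n(f)$ and $\mathcal{T}_n(f) = R/((f)+\mathcal{J}_n(f))$, both statements reduce to understanding how $\mathcal{J}_n(f)$ transforms. The natural strategy is to work at the level of the modules of higher Kähler differentials $\Omega_f^{(n)}$, from which the Jacobian ideals are extracted as Fitting ideals, since Fitting ideals are intrinsic to the isomorphism class of a module and are preserved under base change along ring automorphisms.

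\medskip

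\emph{Part (i).} Suppose $f \sim_r g$, so there is a $\Bbbk$-algebra automorphism $\varphi$ of $R$ with $g_i = \varphi(f_i)$. The plan is to show that $\varphi$ induces an isomorphism of $R$-modules $\Omega_g^{(n)} \cong \varphi_*\Omega_f^{(n)}$, i.e. the pullback of $\Omega_f^{(n)}$ along $\varphi$. Concretely, $\varphi$ determines a commutative diagram relating $\delta_f$ and $\delta_g$, hence relating $\mathcal{I}_f$ and $\mathcal{I}_g$ and their powers; the automorphism $\varphi \otimes \varphi$ on $R \otimes_f R$ (well-defined because $g_i = \varphi(f_i)$ ensures compatibility with the tensor structure over $S$) carries $\mathcal{I}_f$ isomorphically onto $\mathcal{I}_g$ and $\mathcal{I}_f^{n+1}$ onto $\mathcal{I}_g^{n+1}$, giving the module isomorphism. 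Because Fitting ideals commute with the base change induced by $\varphi$, I would then conclude $\varphi(\mathcal{J}_n(f)) = \mathcal{J}_n(g)$, whence $\varphi$ descends to the desired isomorphism $\mathcal{M}_n(f) \cong \mathcal{M}_n(g)$. Note that $\dim R/(f) = \dim R/(g)$ under an automorphism, so the Fitting index $r$ in Definition \ref{higherjacobian} matches on both sides.

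\medskip

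\emph{Part (ii).} Here the new ingredient is multiplication by units: $g_i = u_i\varphi(f_i)$. By part (i) applied to $\varphi(f)$, it suffices to treat the case $\varphi = \mathrm{id}$, i.e. to compare $\mathcal{J}_n(f)$ with $\mathcal{J}_n(uf)$ for a system of units $u = (u_1,\dots,u_s)$, and to show the two agree \emph{after passing modulo $(f)$}, which is exactly what the Tjurina-type quotient $\mathcal{T}_n$ records. The crucial point is that $(uf) = (f)$ as ideals, so $\dim R/(f) = \dim R/(g)$ and the Fitting index is again unchanged; moreover I expect $\mathcal{J}_n(uf) \equiv \mathcal{J}_n(f) \pmod{(f)}$. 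This congruence is the heart of the matter: differentiating $u_i f_i$ by the product rule introduces correction terms, but every term in which a derivative lands on $f_i$ rather than on $u_i$ contributes a factor lying in $(f)$ modulo higher-order pieces — more precisely, the entries of the higher Jacobian matrix $\mathrm{Jac}_n(uf)$ differ from those of $\mathrm{Jac}_n(f)$ by a transformation that, modulo $(f)$, is multiplication by a unit and an invertible change on the row/column indexing, leaving the ideal of maximal minors unchanged modulo $(f)$.

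\medskip

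\emph{The main obstacle.} The genuinely delicate step is the congruence $\mathcal{J}_n(uf) \equiv \mathcal{J}_n(f) \pmod{(f)}$ for a unit $u$, since for $n \geq 2$ the higher Jacobian matrix involves all partial derivatives up to order $n$, and the Leibniz expansion of $\partial^\gamma(uf)$ mixes derivatives of $u$ and of $f$ of all intermediate orders in a way that does not obviously preserve the maximal-minor ideal. I would handle this intrinsically rather than by matrix manipulation: the cleanest route is to observe that $\Omega_{uf}^{(n)}$ and $\Omega_f^{(n)}$ become isomorphic $R$-modules after tensoring with $R/(f)$, because over $R/(f)$ the ideals $\mathcal{I}_{uf}$ and $\mathcal{I}_f$ coincide (the units $u_i$ being invertible means they do not change the kernel of the comultiplication after killing $(f)$), and then invoke invariance of Fitting ideals under the base change $R \to R/(f)$. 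This module-theoretic formulation should sidestep the combinatorial explosion of the Leibniz rule and make the unit-invariance transparent, while Proposition \ref{relmatrix} guarantees the Fitting description agrees with the maximal-minor description in the hypersurface case used as a sanity check.
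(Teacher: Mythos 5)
Your proposal is correct and takes essentially the same route as the paper: your part (i) is the paper's Lemma \ref{local-lem1} (the automorphism induces $\Omega_f^{(n)}\otimes_R R'\cong\Omega_g^{(n)}$ and Fitting ideals commute with base change), and the ``intrinsic'' treatment you settle on for the unit case in part (ii) is exactly the paper's Lemma \ref{local-lem3}, which identifies $(R\otimes_f R)\otimes_R R/(f)$ with $(R\otimes_{uf} R)\otimes_R R/(f)$ via the equality of ideals $(f\otimes 1,\,1\otimes f)=(uf\otimes 1,\,1\otimes uf)$ in $R\otimes_\Bbbk R$ and then again invokes Fitting-ideal base change along $R\to R/(f)$. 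You correctly anticipated that the Leibniz-rule matrix computation is the wrong road, and your chosen argument has no gaps beyond the level of detail the paper itself leaves implicit.
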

%--------------------------
\begin{proof}
The theorem follows directly from Lemmas \ref{local-lem1} and \ref{local-lem3} below.
\end{proof}
\begin{lemma}\label{local-lem1}
Let $\varphi$ be an automorphism of $R$. Then for $n\in \mathbb N^*$,  $\varphi(\JI_n(f))=\JI_n(\varphi(f))$.
\end{lemma}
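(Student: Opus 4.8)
The plan is to realize the automorphism $\varphi$ at the level of the entire construction defining $\JI_n$, and then track how it acts on Fitting ideals. I would work from the Fitting-ideal definition (Definition \ref{higherjacobian}) rather than the explicit matrix description of Proposition \ref{relmatrix}, since the latter is available only for $R=\Bbbk[\mathbf{x}]$, $\Bbbk\{\mathbf{x}\}$, $\Bbbk[\![\mathbf{x}]\!]$ and a single element, whereas the lemma concerns a general analytic or geometric $R$ and a system $f$. As a preliminary, note that $\varphi$ induces a $\Bbbk$-algebra isomorphism $R/(f)\xrightarrow{\sim} R/(\varphi(f))$ because $\varphi((f))=(\varphi(f))$; in particular $\dim R/(f)=\dim R/(\varphi(f))$, so the same Fitting index $r=\binom{n+e}{e}-1$ governs both $\JI_n(f)$ and $\JI_n(\varphi(f))$.

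The central observation is the following. Writing $\mu_f,\mu_{\varphi(f)}\colon S\to R$ for the two structure maps $t_i\mapsto f_i$ and $t_i\mapsto \varphi(f_i)$, one has $\mu_{\varphi(f)}=\varphi\circ\mu_f$, so $\varphi$ is an isomorphism of $S$-algebras from $R$ (with structure $\mu_f$) to $R$ (with structure $\mu_{\varphi(f)}$). Consequently $\Phi:=\varphi\otimes\varphi$ is a well-defined ring isomorphism $R\otimes_f R\to R\otimes_{\varphi(f)}R$. Because $\Phi$ satisfies $\rho_{\varphi(f)}\circ\Phi=\varphi\circ\rho_f$, it carries $\mathcal I_f$ isomorphically onto $\mathcal I_{\varphi(f)}$, and $\mathcal I_f^{\,n+1}$ onto $\mathcal I_{\varphi(f)}^{\,n+1}$, hence descends to an isomorphism $\bar\Phi\colon \Omega_f^{(n)}\to\Omega_{\varphi(f)}^{(n)}$. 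Tracking the $R$-module structures, given by the left tensor factors $\delta_f,\delta_{\varphi(f)}$, shows that $\bar\Phi$ is not $R$-linear but $\varphi$-semilinear: $\bar\Phi(r\cdot m)=\varphi(r)\cdot\bar\Phi(m)$.

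The last ingredient is a general fact about Fitting ideals under semilinear isomorphisms. If $\psi\colon M\to N$ is a $\varphi$-semilinear isomorphism of finite $R$-modules, then applying $\varphi$ entrywise to a presentation matrix $A$ of $M$ yields a presentation matrix $\varphi(A)$ of $N$: the columns of $\varphi(A)$ are relations among the $\psi$-images of the generators of $M$, and they exhaust all relations by pulling back along $\varphi^{-1}$ and using semilinearity. Since the minors of $\varphi(A)$ are the $\varphi$-images of the minors of $A$, and $\varphi$ is a ring automorphism, one obtains $\Fitt_k(N)=\varphi(\Fitt_k(M))$ for all $k$. Applying this with $M=\Omega_f^{(n)}$, $N=\Omega_{\varphi(f)}^{(n)}$ and $k=r$ gives $\JI_n(\varphi(f))=\varphi(\JI_n(f))$, as desired.

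I expect the main obstacle to be the bookkeeping around the two $S$-algebra structures on $R$: one must verify carefully that $\Phi=\varphi\otimes\varphi$ genuinely descends to the tensor products over $S$, and that the induced $\bar\Phi$ is $\varphi$-semilinear rather than $R$-linear. This semilinearity is not a defect but precisely the feature that produces $\varphi(\JI_n(f))$, and not $\JI_n(f)$ itself, on the nose.
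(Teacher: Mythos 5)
Your proof is correct and follows essentially the same route as the paper: both hinge on the isomorphism $\varphi\otimes\varphi\colon R\otimes_f R\to R\otimes_{\varphi(f)}R$, its descent to an identification of $\Omega_f^{(n)}$ with $\Omega_{\varphi(f)}^{(n)}$, and the functorial behavior of Fitting ideals. The only difference is packaging: the paper realizes this identification as an $R'$-linear isomorphism $\Omega_f^{(n)}\otimes_R R'\cong \Omega_{\varphi(f)}^{(n)}$ after base change along $\varphi\colon R\to R'=R$ and cites that Fitting ideals commute with base change, whereas you keep the map $\varphi$-semilinear and reprove the transformation law $\Fitt_k(N)=\varphi\left(\Fitt_k(M)\right)$ directly via presentation matrices --- equivalent formulations of the same fact.
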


\begin{proof}
We denote $\varphi\colon R\to R'=R$ and  $g=\varphi(f)$ in $R'$. Let  
$$\varphi\otimes\varphi\colon  R\otimes_f R\to R'\otimes_g R'$$
denote the isomorphism mapping $\alpha\otimes\beta$ to $\varphi(\alpha)\otimes\varphi(\beta)$. The morphisms $\varphi\otimes\varphi$ and $\delta_g$ give rise to an isomorphism $\phi$ and the following commutative diagram
\begin{displaymath}
\xymatrix@=3 em{
	R  \ar[d]_{\delta_f}\ar[r]^{\varphi}& R'\ar[d]^{}\ar[ddr]^{\delta_g}&\\
	R\otimes_f R \ar[r]_{}\ar[drr]_{\varphi\otimes\varphi}& (R\otimes_f R)\otimes_R R'\ar[dr]^{\phi}&\\
&&R'\otimes_g R'.
}
\end{displaymath}
 We obtain the following isomorphisms of $R'$-modules induced by $\phi$  
$$\mathcal I_f\otimes_R R'\cong \mathcal I_g$$
and therefore
$$\Omega^{(n)}_f\otimes_R R'\cong \Omega^{(n)}_g.$$
Since taking Fitting ideals commutes with base change, it yields that
$$\varphi\left(\Fitt_r(\Omega_f^{(n)})\right)=\Fitt_r(\Omega_f^{(n)}\otimes_R R')=\Fitt_r(\Omega_g^{(n)})$$
as ideals in $R'=R$. Hence the lemma follows.
\end{proof}
\begin{lemma}\label{local-lem3}
Let $g=uf$ where $u=(u_1,\ldots,u_s)$ is a system of unit elements in $R$. Then for any $n\in \mathbb N^*$,  
$$(f)+ \JI_n(f)=(g)+\JI_n(g).$$
\end{lemma}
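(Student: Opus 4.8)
The plan is to reduce everything modulo the ideal $(f)$ and then compare the two modules of higher differentials directly. First I would observe that since each $u_i$ is a unit we have $(f)=(g)$ as ideals of $R$; consequently $e=\dim R/(f)=\dim R/(g)$ and the integer $r=\binom{n+e}{e}-1$ is the same for $f$ and $g$, so that $\JI_n(f)=\Fitt_r(\Omega_f^{(n)})$ and $\JI_n(g)=\Fitt_r(\Omega_g^{(n)})$ are formed with the same Fitting index. It therefore suffices to prove $\JI_n(f)+(f)=\JI_n(g)+(f)$. Since Fitting ideals commute with base change, $\JI_n(f)+(f)$ is the preimage in $R$ of $\Fitt_r\big(\Omega_f^{(n)}\otimes_R R/(f)\big)$, and similarly for $g$; hence it is enough to show that $\Omega_f^{(n)}\otimes_R R/(f)$ and $\Omega_g^{(n)}\otimes_R R/(f)$ have the same Fitting ideal. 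In fact I would show they are literally the same quotient module.

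To do this I would realize both modules as quotients of a single object. Let $B$ denote the tensor square $R\otimes_\Bbbk R$ (taken in the relevant category: the analytic, resp.\ completed, tensor product in the analytic, resp.\ formal, case), let $\rho\colon B\to R$ be the multiplication map and $J=\ker\rho$ its augmentation ideal; as in the proof of the finiteness lemma, $J$ is generated by the elements $x_j\otimes 1-1\otimes x_j$. Writing $K_f=(f_i\otimes 1-1\otimes f_i)$ and $K_g=(g_i\otimes 1-1\otimes g_i)$, one has $R\otimes_f R=B/K_f$ and $R\otimes_g R=B/K_g$ with $K_f,K_g\subseteq J$, whence $\mathcal I_f=J/K_f$ and $\Omega_f^{(n)}=\mathcal I_f/\mathcal I_f^{\,n+1}=J/(J^{n+1}+K_f)$, and likewise for $g$. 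Because the $R$-module structure is given by $\delta_f$, reducing modulo $(f)$ kills multiplication by $F:=(f_i\otimes 1)$, so that
$$\Omega_f^{(n)}\otimes_R R/(f)=J/(J^{n+1}+K_f+FJ),\qquad \Omega_g^{(n)}\otimes_R R/(g)=J/(J^{n+1}+K_g+GJ),$$
where $G:=(g_i\otimes 1)$. Since $g_i\otimes 1=(u_i\otimes 1)(f_i\otimes 1)$ and $u_i\otimes 1$ is a unit of $B$, we get $G=F$.

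It then remains to prove the single equality $K_f+FJ=K_g+FJ$, which I expect to be the crux. Here I would use the Leibniz-type identity, valid in $B$,
$$g_i\otimes 1-1\otimes g_i=(1\otimes u_i)\,(f_i\otimes 1-1\otimes f_i)+(u_i\otimes 1-1\otimes u_i)\,(f_i\otimes 1).$$
The first summand lies in $K_f$, while $u_i\otimes 1-1\otimes u_i\in J$ and $f_i\otimes 1\in F$ place the second summand in $FJ$; hence $K_g\subseteq K_f+FJ$. Running the same computation with $f_i=u_i^{-1}g_i$ gives the reverse inclusion, so $K_f+FJ=K_g+FJ$ and the two reduced modules coincide. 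Comparing Fitting ideals and invoking $(f)=(g)$ then yields $(f)+\JI_n(f)=(g)+\JI_n(g)$. Beyond this identity, the only delicate points are justifying the presentation $\Omega_f^{(n)}=J/(J^{n+1}+K_f)$ and the description of its reduction mod $(f)$ as the quotient by $FJ$ in the analytic/formal tensor-product setting; both are formal once one has the descriptions of $\mathcal I_f$ and $J$ recalled above.
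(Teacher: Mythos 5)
Your proof is correct and follows essentially the same route as the paper's: both arguments reduce modulo $(f)$, present everything as quotients of $R\otimes_\Bbbk R$, use that $u$ is a unit to identify the relevant ideals, and conclude via the base-change property of Fitting ideals. If anything, yours is more careful at the crucial step: the paper merely identifies the quotient rings $R\otimes_\Bbbk R/(f\otimes 1,1\otimes f)=R\otimes_\Bbbk R/(g\otimes 1,1\otimes g)$ and asserts that this induces an isomorphism $\Omega_f^{(n)}\otimes_R R/(f)\cong \Omega_g^{(n)}\otimes_R R/(g)$, whereas your Leibniz-type identity giving $K_f+FJ=K_g+FJ$ is precisely the finer fact needed to justify that assertion, since $\Omega_f^{(n)}\otimes_R R/(f)=J/(J^{n+1}+K_f+FJ)$ rather than $J/(J^{n+1}+K_f+F)$.
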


\begin{proof}
Let $$\varphi\colon R\to R':=R/_{(f)}=R/_{(g)}$$ denote the natural projection, which maps $f$ and $g$ to $0$. Note that 
$$ R\otimes_f R\cong  R\otimes_\Bbbk R/_{(f\otimes 1-1\otimes f)}, \alpha\otimes\beta\mapsto \alpha\otimes\beta.$$
and hence 
$$ (R\otimes_f R)\otimes_R R'\cong  R\otimes_\Bbbk R/_{(f\otimes 1,1\otimes f)}.$$
Similarly one has
$$ (R\otimes_g R)\otimes_R R'\cong  R\otimes_\Bbbk R/_{(g\otimes 1,1\otimes g)}.$$
Since $g=uf$, it follows that $$R\otimes_\Bbbk R/_{(f\otimes 1,1\otimes f)}=R\otimes_\Bbbk R/_{(g\otimes 1,1\otimes g)}.$$
We obtain an isomorphism 
$$ \phi\colon(R\otimes_f R)\otimes_R R'\cong  (R\otimes_g R)\otimes_R R'$$
which maps $\left((\alpha\otimes\beta)\otimes\gamma\right)$ to $\left((\alpha\otimes\beta)\otimes\gamma\right)$. The morphism $\phi$ induces an isomorphism of $R'$-modules 
$$\phi\colon \Omega^{(n)}_f\otimes_R R' \to  \Omega^{(n)}_g\otimes_R R'.$$
It yields that, the following identities hold in $R'$
\begin{align*}
\varphi(\JI_n(f))&=\varphi\left(\Fitt_r(\Omega_f^{(n)})\right)\;\;\;=\Fitt_r(\Omega_f^{(n)}\otimes_R R')\\
&=\Fitt_r(\Omega_g^{(n)}\otimes_R R')=\varphi(\JI_n(g)).
\end{align*}
Hence the identity
$$(f)+ \JI_n(f)=(g)+\JI_n(g)$$
holds in $R$.
\end{proof}

\section{Mather-Yau theorem for higher Nash blowup algebras}
The well-known Mather-Yau theorem \cite{MaY82} says that the isomorphism type of the local ring of an isolated complex hypersurface singularity is determined by its Tjurina algebra. It was generalized to the case of isolated varieties by Gaffney-Hauser \cite{GH85}. In the case of positive characteristic this was archieved by Greuel-Pham \cite{GrP17} and  Kerner \cite{Ker22} by considering higher Tjurina algebras, see Theorem \ref{GP18}. In this section we give a characterization of the  isomorphism type of the local ring of a hypersurface singularity $f$ without any assumption on $f$ by using higher Nash blowup algebras (Theorem \ref{main2}). Recall that a hypersurface singularity is an element in the  maximal ideal $\mathfrak{m}=(\mathbf{x})$ of the ring $R=\Bbbk[\![\mathbf{x}]\!]$ or $\Bbbk\{\mathbf{x}\}$. The {\it multiplicity} of $f$, denoted by $\operatorname{mt}(f)$, is the maximal $k$ such that $f\in\mathfrak{m}^k$. The {\it $k$-th Tjurina algebra} of $f$ is defined as
$$T_n(f)=R/_{ (f)+\mathfrak{m}^k \operatorname{j}(f)},$$
where $\operatorname{j}(f)$ denotes the Jacobian ideal of $f$. Notice that $\operatorname{j}(f)=\mathcal{J}_1(f)$ by Proposition \ref{relmatrix}. The dimension $\dim_\Bbbk T_0(f)$ is called the Tjurina number of $f$, and denoted by $\tau(f)$.
\begin{theorem}[Mather-Yau; Gaffney-Hauser; Greuel-Pham; Kerner]\label{GP18}
 Let $f, g \in \Bbbk[\![\mathbf{x}]\!]$ be such that $\operatorname{mt}(f)\geq 2$ and $\tau(f)<\infty$. Let
$$
k=
\begin{cases}
2\tau(f)-2\operatorname{mt}(f)+4, \text{ if } \mathrm{char}(\Bbbk)>0,\\
1\text{ if } \mathrm{char}(\Bbbk)=0,\\
0 \text{ if } \mathrm{char}(\Bbbk)=0  \text{ and } \Bbbk=\bar\Bbbk.
\end{cases}
$$
Then the following are equivalent:
\begin{itemize}
	\item[(i)] $f {\sim_c} g$.
	\item[(ii)] $T_n(f) \cong T_n(g)$ as $\Bbbk$-algebras for all $n\geq k$;
	\item[(iii)] $T_n(f) \cong T_n(g)$ as $\Bbbk$-algebras for some $n\geq k$.
\end{itemize} 
\end{theorem}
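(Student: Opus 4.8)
The plan is to prove the cycle $(i)\Rightarrow(ii)\Rightarrow(iii)\Rightarrow(i)$, the middle implication being trivial. The substance lies in the two remaining implications: $(i)\Rightarrow(ii)$ records the stability of the higher Tjurina algebras under contact equivalence, while $(iii)\Rightarrow(i)$ is the genuine Mather--Yau phenomenon and is where the arithmetic of the ground field enters through the value of $k$. Since the statement collects results of Mather--Yau, Gaffney--Hauser, Greuel--Pham and Kerner, my sketch follows the structure common to their arguments, and for the quantitative bound I would ultimately invoke the cited references rather than reprove it.

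For $(i)\Rightarrow(ii)$ I would argue exactly as in Lemmas \ref{local-lem1} and \ref{local-lem3}, but for the ideal $(f)+\mathfrak m^n\operatorname{j}(f)$ in place of $(f)+\JI_n(f)$. Write $g=u\,\varphi(f)$ with $\varphi\in\operatorname{Aut}_\Bbbk(R)$ and $u\in R^\times$. Since $\varphi$ is a local automorphism it preserves $\mathfrak m$, and the chain rule gives $\varphi(\operatorname{j}(f))=\operatorname{j}(\varphi(f))$; this is Lemma \ref{local-lem1} for $n=1$, as $\operatorname{j}(f)=\JI_1(f)$. Hence $\varphi$ carries $(f)+\mathfrak m^n\operatorname{j}(f)$ isomorphically onto $(\varphi(f))+\mathfrak m^n\operatorname{j}(\varphi(f))$, so we may assume $g=uf$. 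Then $(uf)=(f)$, and from $\partial_i(uf)=u\,\partial_i f+f\,\partial_i u$ one gets $\mathfrak m^n\operatorname{j}(uf)\subseteq\mathfrak m^n\operatorname{j}(f)+(f)$; the reverse inclusion follows by symmetry with $u^{-1}$, giving $(f)+\mathfrak m^n\operatorname{j}(f)=(g)+\mathfrak m^n\operatorname{j}(g)$ and therefore $T_n(f)\cong T_n(g)$.

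The hard direction is $(iii)\Rightarrow(i)$, which I would carry out in three steps. First, lift the abstract isomorphism: a $\Bbbk$-algebra isomorphism $\psi\colon T_n(f)\xrightarrow{\sim}T_n(g)$ between quotients $R/I_n(f)\cong R/I_n(g)$ with $I_n\subseteq\mathfrak m^2$ (here $\operatorname{mt}(f)\geq 2$ forces $f\in\mathfrak m^2$, whence $\operatorname{j}(f)\subseteq\mathfrak m$ and $I_n\subseteq\mathfrak m^2$) induces an isomorphism of cotangent spaces $\mathfrak m/\mathfrak m^2$, so lifting the images of the coordinates $x_i$ produces an automorphism $\Phi$ of $R$ with $\Phi(I_n(f))=I_n(g)$. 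Using $\Phi(\mathfrak m)=\mathfrak m$ and $\Phi(\operatorname{j}(f))=\operatorname{j}(\Phi(f))$ as in the previous paragraph, and replacing $f$ by $\Phi(f)$ (a right-, hence contact-, equivalence), we reduce to an honest equality of ideals $(f)+\mathfrak m^n\operatorname{j}(f)=(g)+\mathfrak m^n\operatorname{j}(g)$ in $R$. Second, invoke finite determinacy: because $\tau(f)<\infty$, the germ $f$ is finitely contact-determined, which is what makes a single truncated datum suffice and is responsible for the lower bound on $k$. Third, pass from the ideal equality to contact equivalence. In characteristic zero this is Mather's homotopy method: one joins $f$ to $g$ by a path $f_t$ and solves the homological equation $\dot f_t\in (f_t)+\mathfrak m\,\operatorname{j}(f_t)$ by integrating the resulting vector field, solvability being guaranteed by the order-$n$ Tjurina ideal equality; the algebraically closed case permits $k=0$ and the general characteristic-zero case $k=1$.

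The main obstacle, and the reason the theorem carries the explicit bound $k=2\tau(f)-2\operatorname{mt}(f)+4$ in positive characteristic, is precisely this third step: in characteristic $p$ one cannot integrate vector fields, so the homotopy method is unavailable. Here I would instead appeal to the sharp finite-determinacy theorems for contact equivalence over arbitrary fields developed by Greuel--Pham and Kerner, which replace analytic integration by an algebraic descent along the filtration $\mathfrak m^n\operatorname{j}(f)$ and yield exactly the stated bound; this is the part I would cite rather than reconstruct.
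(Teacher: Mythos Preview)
The paper does not give its own proof of this theorem: it is stated as background and attributed to Mather--Yau, Gaffney--Hauser, Greuel--Pham, and Kerner, with no argument supplied beyond the citations. There is therefore nothing in the paper to compare your proposal against.

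Your sketch is a reasonable outline of how the proofs in those references actually run, and you yourself correctly identify it as such, ultimately deferring the positive-characteristic bound to the cited works. One small technical point worth flagging: in your lifting step for $(iii)\Rightarrow(i)$ you argue that $I_n=(f)+\mathfrak m^n\operatorname{j}(f)\subseteq\mathfrak m^2$, but this fails when $n=0$ (the algebraically closed characteristic-zero case), since $\operatorname{j}(f)$ need only lie in $\mathfrak m$. The original Mather--Yau argument for $k=0$ handles the lifting differently, and you would need to treat that case separately rather than folding it into the uniform cotangent-space argument.
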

The following result says that for any $f$ and any $\Bbbk$, the  isomorphism type of the local ring of $f$ is determined by the $n$-th Nash blowup algebras with $n\geq 2$. Notice that, if $\Bbbk$ is an algebrically clsoed field of characteristic zero, then it can be determined by the first Nash blowup algebra \cite{MaY82,GrP17}. This is not true, in general, if the field $\Bbbk$ is either of positive characteristic or not algebraically closed as Example \ref{firstblowup} shows.
\begin{theorem}\label{main2}
Let $R=\Bbbk\{\mathbf{x}\}$ with $\mathrm{char} (\Bbbk)=0$ or $R=\Bbbk[\![\mathbf{x}]\!]$. Let $f$ and $g$ be two elements in $R$. The following are equivalent
\begin{itemize}
	\item[(i)] $f {\sim_c} g$;	
	\item[(ii)] $\mathcal T_n(f)\cong \mathcal T_n(g)$ for all $n\geq 2$;
	\item[(iii)] $\mathcal T_n(f)\cong \mathcal T_n(g)$ for some $n\geq 2$;
\end{itemize} 
\end{theorem}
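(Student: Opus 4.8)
The plan is to establish the cycle of implications $(i)\Rightarrow(ii)\Rightarrow(iii)\Rightarrow(i)$. The first implication is immediate from Theorem \ref{main1}(ii): contact equivalent elements have isomorphic $n$-th Nash blowup algebras for every $n$, in particular for all $n\geq 2$. The implication $(ii)\Rightarrow(iii)$ is trivial. All the substance lies in $(iii)\Rightarrow(i)$, and the role of the hypothesis $n\geq 2$ will be decisive.

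So assume $\mathcal{T}_n(f)\cong\mathcal{T}_n(g)$ for some fixed $n\geq 2$, where I may assume $f,g\in\mathfrak{m}$. I would first dispose of the smooth case by noting that $\mathcal{T}_n(h)=0$ if and only if $h$ has multiplicity one: if $h\in\mathfrak{m}^2$ then $\mathcal{J}_1(h)\subseteq\mathfrak{m}$ and hence, by Proposition \ref{inclusion}, $\mathcal{J}_n(h)\subseteq\mathfrak{m}$, so $(h)+\mathcal{J}_n(h)\subseteq\mathfrak{m}\neq R$; while if $h$ is smooth one reduces $h$ to $x_1$ by a right equivalence and checks directly that $\mathcal{J}_n(x_1)=R$, whence $\mathcal{T}_n(h)=0$. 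Thus $\mathcal{T}_n(f)\cong\mathcal{T}_n(g)$ forces $f$ and $g$ to be simultaneously smooth --- in which case both are contact equivalent to $x_1$ and we are done --- or simultaneously singular. Hence from now on I assume $f,g\in\mathfrak{m}^2$.

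The crucial observation is that for $f\in\mathfrak{m}^2$ and $n\geq 2$ the entire ideal $\mathfrak{a}:=(f)+\mathcal{J}_n(f)$ is contained in $\mathfrak{m}^2$. Indeed $(f)\subseteq\mathfrak{m}^2$, and Proposition \ref{inclusion} gives $\mathcal{J}_n(f)\subseteq\mathcal{J}_1(f)^{\binom{d-2+n}{d-1}}\subseteq\mathcal{J}_1(f)^2\subseteq\mathfrak{m}^2$, since the exponent is at least $2$ once $n\geq 2$ (the one-variable case $d=1$ is elementary, as contact equivalence there is detected by the multiplicity alone, which is read off from $\mathcal{T}_n$). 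This is precisely where $n\geq 2$, rather than $n\geq 1$, is needed. The same holds for $\mathfrak{b}:=(g)+\mathcal{J}_n(g)$. Because $R$ is a power series ring and $\mathfrak{a},\mathfrak{b}\subseteq\mathfrak{m}^2$, the given $\Bbbk$-algebra isomorphism $R/\mathfrak{a}\cong R/\mathfrak{b}$ lifts to an automorphism $\varphi$ of $R$ carrying $\mathfrak{a}$ onto $\mathfrak{b}$: the inclusion into $\mathfrak{m}^2$ ensures that the induced map on $\mathfrak{m}/\mathfrak{m}^2$ is an isomorphism, so any lift is automatically an automorphism. Invoking Lemma \ref{local-lem1} to rewrite $\varphi(\mathcal{J}_n(f))=\mathcal{J}_n(\varphi(f))$ and replacing $f$ by the right-equivalent element $\varphi(f)$, I reduce to an honest equality of ideals
\[
(f)+\mathcal{J}_n(f)=(g)+\mathcal{J}_n(g)=:I .
\]

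For the final step I would feed $I$ into Samuel's theorem. Sharpening the containment above to $\mathcal{J}_n(f)\subseteq\mathcal{J}_1(f)^2\subseteq\mathfrak{m}\,\mathcal{J}_1(f)$ (and symmetrically for $g$), the equality of ideals yields $g\in I\subseteq(f)+\mathfrak{m}\,\mathcal{J}_1(f)$ and $f\in I\subseteq(g)+\mathfrak{m}\,\mathcal{J}_1(g)$; that is, $g$ lies in the ideal $(f)+\mathfrak{m}\operatorname{j}(f)$ defining the Tjurina-type algebra $T_1(f)$, and symmetrically for $f$. This is exactly the input for Samuel's 1956 contact-determinacy result, which concludes $f\sim_c g$ while requiring neither finiteness of $\tau(f)$ nor any hypothesis on $\Bbbk$; this is what allows the statement to cover non-isolated singularities and arbitrary characteristic, in contrast with the finite-determinacy route underlying Theorem \ref{GP18}. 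I expect this last invocation to be the main obstacle: one must verify that the information coming from the higher Jacobian ideal, namely membership in $(f)+\mathfrak{m}\,\mathcal{J}_1(f)$ together with its symmetric counterpart, matches precisely the hypotheses of Samuel's theorem. The smooth reduction and the isomorphism lifting are routine; the weight of the argument is in correctly packaging the inclusion $\mathcal{J}_n(f)\subseteq\mathfrak{m}\,\mathcal{J}_1(f)$ so that the classical result applies.
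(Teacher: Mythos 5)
Your reduction steps (disposing of the smooth case, lifting the $\Bbbk$-algebra isomorphism to an automorphism of $R$, and invoking Lemma \ref{local-lem1} to reduce to the ideal equality $(f)+\mathcal{J}_n(f)=(g)+\mathcal{J}_n(g)$) match the paper and are sound. The gap is exactly where you feared: your containment is too weak for Samuel's theorem. Theorem \ref{samuel} requires $g \equiv f \bmod \mathfrak{m}\,\operatorname{j}(f)^2$ --- the \emph{square} of the Jacobian ideal --- whereas you only establish $g \in (f)+\mathcal{J}_1(f)^2 \subseteq (f)+\mathfrak{m}\,\mathcal{J}_1(f)$, a congruence modulo $\mathfrak{m}\,\operatorname{j}(f)$ to the first power. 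These are genuinely different: to land in $\mathfrak{m}\,\operatorname{j}(f)^2$ one needs $\mathcal{J}_n(f)\subseteq\mathcal{J}_1(f)^3$ (the cube, not the square), and Proposition \ref{inclusion} delivers the cube only when $d\geq 3$, $n\geq 2$ or $d=2$, $n\geq 3$. The leftover case $d=2$, $n=2$, $\operatorname{mt}(f)=2$ is precisely why the paper proves Lemma \ref{higherjacobians}: that case is handled by classifying multiplicity-two plane curve singularities over an arbitrary field (including characteristic $2$) and computing $(f_i)+\mathcal{J}_2(f_i)$ explicitly for each normal form. Your exponent count (``at least $2$ once $n\geq 2$'') cannot see this, and no argument resting only on $\mathfrak{m}\,\operatorname{j}(f)$ can succeed: a first-power version of Samuel's theorem would essentially say that the ideal $(f)+\mathfrak{m}\operatorname{j}(f)$ controls the contact class, which is exactly the kind of statement that fails in positive characteristic (Example \ref{firstblowup}).

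Two further points the paper needs and you skip. First, membership $g\in(f)+\mathfrak{m}\,\operatorname{j}(f)^2$ only gives $g=uf+h$ with $h\in\mathfrak{m}\,\operatorname{j}(f)^2$ and $u$ an arbitrary element of $R$, while Samuel needs coefficient $1$ on $f$; the paper arranges WLOG $\operatorname{mt}(f)\geq\operatorname{mt}(g)$, notes $\operatorname{mt}(h)\geq 1+2(\operatorname{mt}(f)-1)>\operatorname{mt}(f)$, concludes that $u$ must be a unit, and then applies Samuel to $f$ and $u^{-1}g$. Your symmetric memberships do not substitute for this. Second, Samuel's theorem is a statement about $\Bbbk[\![\mathbf{x}]\!]$; for $R=\Bbbk\{\mathbf{x}\}$ in characteristic $0$ the automorphism it produces is a priori only formal, and the paper must invoke Artin approximation (Theorem \ref{artin}) to replace it by a convergent one. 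Your proposal never distinguishes the convergent case at all.
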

For the proof of this theorem we need the following well-known results of Samuel \cite{S56}, Artin \cite{Art68} and a technical resullt (Lemma \ref{higherjacobians}).
\begin{theorem}[Samuel]\label{samuel}
Let $R=\Bbbk[\![\mathbf{x}]\!]$ be a powerseries ring over a field $\Bbbk$ and $\mathfrak{m}=\left(x_1, \ldots, x_d\right)$ be the maximal ideal of $R$. Suppose that $f$ and $g$ are elements in $R$ such that $\operatorname{j}(f) \neq R$. If $g \equiv f \bmod \mathfrak{m} \operatorname{j}(f)^2$, then there exists an automorphism $s$ of $R$ such that $s(f)=g$.
\end{theorem}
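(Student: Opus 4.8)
The plan is to show that the perturbation $h:=g-f\in\mathfrak m\,\operatorname{j}(f)^2$ can be absorbed by a formal coordinate change, constructed as the $\mathfrak m$-adic limit of a Newton-type successive approximation. Throughout I write $\operatorname{j}(f)=(f_1,\dots,f_d)$ with $f_i=\partial f/\partial x_i$, and note that $\operatorname{j}(f)\neq R$ is equivalent to $\operatorname{j}(f)\subseteq\mathfrak m$ since $R$ is local. The engine of the argument is the elementary observation that if the current function is $F$ and the current error $r$ lies in $\mathfrak m^a\,\operatorname{j}(F)^2$, then writing $r=\sum_i\gamma_i\,\partial F/\partial x_i$ with $\gamma_i\in\mathfrak m^a\,\operatorname{j}(F)\subseteq\mathfrak m^{a+1}$ (possible because $\operatorname{j}(F)^2=\operatorname{j}(F)\cdot(\partial F/\partial x_1,\dots,\partial F/\partial x_d)$) and substituting $\psi\colon x_i\mapsto x_i+\gamma_i$, the first-order Taylor term cancels $r$ exactly while the remaining terms lie in $\mathfrak m^{2a}\,\operatorname{j}(F)^2$. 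Thus a single step turns an error of order $a$ into one of order $2a$, and $\psi$ is an automorphism since its linear part is the identity.

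The key lemma I would isolate first is the stability of the Jacobian ideal: \emph{if $\operatorname{j}(f)\subseteq\mathfrak m$ and $h\in\mathfrak m\,\operatorname{j}(f)^2$, then $\operatorname{j}(f+h)=\operatorname{j}(f)$.} Writing $h=\sum_k m_k a_k b_k$ with $m_k\in\mathfrak m$ and $a_k,b_k\in\operatorname{j}(f)$, the Leibniz rule gives $\partial h/\partial x_i\in\mathfrak m\,\operatorname{j}(f)$ for every $i$: in each of the three product terms at least one undifferentiated factor stays in $\operatorname{j}(f)$, while one factor always supplies an element of $\mathfrak m$. Hence $\operatorname{j}(f+h)\subseteq\operatorname{j}(f)$ and $\operatorname{j}(f)\subseteq\operatorname{j}(f+h)+\mathfrak m\,\operatorname{j}(f)$, so Nakayama forces equality. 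This lemma is what lets me keep working with a single fixed ideal $J:=\operatorname{j}(f)$ at every stage of the iteration, rather than chasing the Jacobian ideal of the ever-changing function (which is essential, since the decomposition $r=\sum_i\gamma_i\,\partial F_k/\partial x_i$ must use the partials of the \emph{current} function).

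With these two ingredients the induction runs as follows. Set $\Phi_0=\mathrm{id}$, $F_0=f$, so $g-F_0=h\in\mathfrak m^1 J^2$. Assuming $F_k=\Phi_k(f)$ with $\operatorname{j}(F_k)=J$ and $g-F_k\in\mathfrak m^{2^k}J^2$, apply the one-step construction to the error $r_k=g-F_k$, producing $\psi_k\equiv\mathrm{id}\ (\mathrm{mod}\ \mathfrak m^{2^k+1})$ and $F_{k+1}:=\psi_k(F_k)=\Phi_{k+1}(f)$ with $\Phi_{k+1}=\psi_k\circ\Phi_k$ and new error $g-F_{k+1}\in\mathfrak m^{2^{k+1}}J^2$; the stability lemma applied to $F_k$ (whose increment lies in $\mathfrak m J^2$) keeps $\operatorname{j}(F_{k+1})=J$. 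Because $\psi_k\to\mathrm{id}$ at $\mathfrak m$-adic rate $2^k$, the partial compositions $\Phi_k$ form a Cauchy sequence of continuous endomorphisms of the complete ring $R$; their limit $s$ has identity linear part and is therefore an automorphism, and continuity of substitution gives $s(f)=\lim_k\Phi_k(f)=\lim_k F_k=g$, since $g-F_k\in\mathfrak m^{2^k}\to0$.

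The main obstacle is making the Taylor expansion and all the estimates characteristic-free, since the statement allows $\Bbbk$ of arbitrary characteristic. I would handle this by expanding $F(x+\gamma)=\sum_\alpha (D^{(\alpha)}F)\,\gamma^\alpha$ with the Hasse--Schmidt (divided-power) higher derivations $D^{(\alpha)}$, for which the identity $D^{(e_i)}=\partial/\partial x_i$ and the substitution formula hold over any commutative ring, so that no division by factorials ever occurs; the order estimate on the tail $\sum_{|\alpha|\ge2}(D^{(\alpha)}F)\,\gamma^\alpha$ uses only that $\gamma^\alpha\in(\mathfrak m^a J)^{|\alpha|}\subseteq\mathfrak m^{2a}J^2$ for $|\alpha|\ge 2$. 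The secondary technical point, the stability lemma, likewise rests only on the Leibniz rule and Nakayama and is thus insensitive to the characteristic; once these are in place, the completeness of $\Bbbk[\![\mathbf x]\!]$ supplies the limiting automorphism and finishes the proof.
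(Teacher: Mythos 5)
Your proof is correct, but there is nothing in the paper to compare it against: the paper does not prove Theorem \ref{samuel} at all, quoting it as a classical result with a citation to Samuel \cite{S56} (it is also the key lemma in Greuel--Pham \cite{GrP17}). Your argument is, in substance, the standard successive-approximation proof that underlies that citation, and both ingredients you isolate are exactly the right ones: the stability lemma $\operatorname{j}(f+h)=\operatorname{j}(f)$ for $h\in\mathfrak m\,\operatorname{j}(f)^2$ (Leibniz plus Nakayama, correctly using $\operatorname{j}(f)\subseteq\mathfrak m$, and indispensable because the decomposition of the error must use the partials of the current function), and the one-step Newton improvement $\mathfrak m^{a}J^2\rightsquigarrow\mathfrak m^{2a}J^2$ with $\gamma_i\in\mathfrak m^{a}J\subseteq\mathfrak m^{2}$, after which completeness of $R$ yields a limiting endomorphism with identity linear part, hence an automorphism, and continuity of substitution gives $s(f)=g$. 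Two small points are worth tightening in a written version, though neither affects correctness. First, rather than summing the Hasse--Schmidt tail over infinitely many multi-indices $\alpha$ (which strictly speaking requires the remark that the ideal $\mathfrak m^{2a}J^2$ is closed in the $\mathfrak m$-adic topology, true since $R$ is Noetherian and complete), it is cleaner to use the finite form of the remainder,
$$F(\mathbf x+\mathbf y)-F(\mathbf x)-\sum_{i=1}^{d} y_i\,\frac{\partial F}{\partial x_i}(\mathbf x)\in\left(y_1,\ldots,y_d\right)^2\Bbbk[\![\mathbf x,\mathbf y]\!],$$
valid over any field since the coefficient of $y_i$ in $F(\mathbf x+\mathbf y)$ is $\partial F/\partial x_i$; substituting $\mathbf y=\gamma$ puts the tail in $(\gamma)^2\subseteq\mathfrak m^{2a}J^2$ directly. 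Second, in the stability lemma one should write $h$ as a finite $R$-combination of the generators $x_m\,\partial_i f\,\partial_j f$ of $\mathfrak m\operatorname{j}(f)^2$ (Noetherianity) so that the Leibniz computation runs over a finite sum; your three-term case analysis then goes through verbatim.
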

\begin{theorem}[Artin's approximation theorem]\label{artin} Let $\Bbbk$ be a valued field of characteristic zero and let $f(x, y)$ be a vector of convergent power series in two sets of variables $x$ and $y$. Assume given a formal power series solution $\widehat{y}(x)$ vanishing at 0 ,
$$
f(x, \widehat{y}(x))=0.
$$
Then there exists, for any $c \in \mathbb{N}$, a convergent power series solution $y(x)$, 
$$
f(x, \tilde{y}(x))=0,
$$
which coincides with $\widehat{y}(x)$ up to degree $c$,
$$
\tilde{y}(x) \equiv \widehat{y}(x) \text { modulo }(x)^c .
$$
\end{theorem}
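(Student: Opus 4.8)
The statement is a special case of Artin's approximation theorem, so my plan is to follow Artin's original strategy, which rests on three tools available precisely because $\Bbbk$ is a valued field of characteristic zero: the Weierstrass preparation and division theorems in $\Bbbk\{x\}$, the analytic implicit function theorem, and the Henselian property of $\Bbbk\{x\}$. First I would normalize the problem. Writing $\widehat y^{(c)}$ for the truncation of $\widehat y$ to degree $<c$ (a polynomial, hence convergent) and substituting $y=\widehat y^{(c)}+z$, the system $g(x,z):=f(x,\widehat y^{(c)}+z)$ has the formal solution $\widehat z=\widehat y-\widehat y^{(c)}\in (x)^c$ and satisfies $g(x,0)\in (x)^c$. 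Any convergent solution $\widetilde z\in (x)^c$ of $g=0$ then gives $\widetilde y=\widehat y^{(c)}+\widetilde z$ with $\widetilde y\equiv\widehat y\bmod (x)^c$, so the approximation order is absorbed and it suffices to produce one convergent solution of prescribed high order out of the formal one.

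The argument then splits according to the Jacobian $J=\left(\partial f_i/\partial y_j\right)$ evaluated along the formal solution. In the nondegenerate case, where $J$ attains maximal rank modulo the maximal ideal, the analytic implicit function theorem applies: one solves a maximal independent subset of the equations for the corresponding $y$-variables as convergent functions of $x$ and of the remaining free variables. Substituting for the free variables the order-$c$ truncations of the matching components of $\widehat y$ produces a convergent solution agreeing with $\widehat y$ to order $c$; in this regime the convergent and formal solution germs coincide, so nothing is lost.

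The whole difficulty is concentrated in the degenerate case, and this is the step I expect to be the main obstacle. Here I would induct on the number $n$ of $x$-variables. Since $\Bbbk$ is infinite, a generic linear change of the $x$-coordinates, guided by the formal solution, makes one of the equations regular of finite order in $x_n$; Weierstrass preparation then replaces it by a Weierstrass polynomial in $x_n$, and Weierstrass division lets one eliminate $x_n$ and descend to a system in $n-1$ variables still carrying a formal solution. Applying the inductive hypothesis and lifting back, while tracking the order-$c$ congruence through the elimination, closes the induction; the base case $n=0$ is immediate. The delicate point — and the reason convergence is used essentially — is maintaining both solvability and the prescribed approximation order across the variable-elimination, where the interaction of Weierstrass division with the formal data must be controlled.

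Finally, I would record the cleanest modern alternative, which bypasses this bookkeeping at the cost of a stronger input: N\'eron--Popescu desingularization. As $\Bbbk\{x\}$ is an excellent Henselian local ring, its completion map $\Bbbk\{x\}\to\Bbbk[\![x]\!]$ is regular, so $\Bbbk[\![x]\!]$ is a filtered colimit of smooth $\Bbbk\{x\}$-algebras. The formal solution defines a $\Bbbk\{x\}$-algebra map $\Bbbk\{x,y\}/(f)\to\Bbbk[\![x]\!]$, which, the source being finitely presented, factors through some smooth $\Bbbk\{x\}$-algebra $D$. By smoothness together with the Henselian property of $\Bbbk\{x\}$, the resulting $\Bbbk[\![x]\!]$-point of $D$ is approximated to order $c$ by a $\Bbbk\{x\}$-point, and composition yields a convergent solution $\widetilde y$ with $\widetilde y\equiv\widehat y\bmod (x)^c$. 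On this route the entire burden rests on Popescu's theorem, so the main obstacle is simply the strength of the result one is willing to assume.
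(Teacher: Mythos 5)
The paper does not prove this statement at all: it is quoted verbatim as a classical theorem of Artin, with the citation \cite{Art68} standing in for the proof. So the relevant question is whether your blind reconstruction would actually constitute a proof, and on the central route it does not. Your reduction to high-order solutions (truncate $\widehat y$, substitute $y=\widehat y^{(c)}+z$) and your treatment of the nondegenerate case are fine, apart from the aside that the convergent and formal germs ``coincide'' there --- that holds only when the implicit function theorem leaves no free variables; otherwise the free components of $\widehat y$ need not be convergent and one only gets agreement to order $c$, as your own truncation device in fact provides. The genuine gap is in the degenerate case. In Artin's induction the generic linear change of the $x$-coordinates is \emph{not} used to make one of the equations $f_i$ regular in $x_n$ (the $f_i$ involve $y$, so this is not even meaningful as stated); it is applied to $\delta(x,\widehat y(x))^2$, where $\delta$ is a minor of the Jacobian $\partial f/\partial y$ that does not vanish identically along the formal solution. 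Securing the existence of such a $\delta$ requires a preliminary reduction you omit: replace the ideal $(f)$ by the prime ideal of \emph{all} relations satisfied by $\widehat y$ over $\Bbbk\{x\}$, so that $\widehat y$ becomes a generic solution. The descent to $n-1$ variables then proceeds by Weierstrass division of the \emph{unknowns}: one writes $y$ as a polynomial in $x_n$ of degree bounded by the order of the Weierstrass polynomial of $\delta^2$, with coefficients that are new unknowns in $x_1,\dots,x_{n-1}$, and one needs Tougeron's implicit-function argument to convert solutions modulo $\delta^2$ into exact solutions. You explicitly flag this elimination step as ``the delicate point'' and leave it unargued --- but that step \emph{is} the theorem; everything before it is routine, so route one is a plan rather than a proof.

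Your second route, via N\'eron--Popescu desingularization, is correct as sketched: $\Bbbk\{x\}$ is an excellent Henselian local ring (for a complete, nontrivially valued field in characteristic zero), so $\Bbbk\{x\}\to\Bbbk[\![x]\!]$ is regular, Popescu's theorem factors the formal solution through a smooth $\Bbbk\{x\}$-algebra, and smoothness plus Henselianity yields a convergent point agreeing with $\widehat y$ modulo $(x)^c$. Modulo the (very heavy) cited inputs this is a complete argument, and it is no less legitimate than what the paper itself does, namely cite Artin. If you want a self-contained proof, however, it is the Jacobian-minor reduction and the $x_n$-elimination above that you must actually carry out.
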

\begin{lemma}\label{higherjacobians}
Let $R=\Bbbk\{\mathbf{x}\}$ or $\Bbbk[\![\mathbf{x}]\!]$  and let $\mathfrak{m}=(\mathbf{x})$. Assume that $\operatorname{mt}(f)\geq 2$ and $n\geq 2$. Then
\begin{itemize}
	\item[(i)] $\mathcal{J}_{n}(f) \subseteq \mathcal{J}_{n-1}(f)$;
\item[(ii)] $\mathcal{J}_n(f)\subseteq\mathfrak{m}\mathcal{J}_1(f)^2,$  if either $d\geq 3$ or $n \geq 3$ or $\operatorname{mt}(f)\geq 3$;
	\item[(iii)] $(f)+\mathcal{J}_{n}(f)\subseteq (f)+\mathfrak{m}\mathcal{J}_1(f)^2$.
\end{itemize} 
\end{lemma}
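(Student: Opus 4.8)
My plan rests on the explicit matrix $\Jac_n(f)$ of Proposition \ref{relmatrix}, whose maximal minors generate $\mathcal{J}_n(f)$, together with its block structure. Ordering rows (indexed by $\beta$, $0\le|\beta|\le n-1$) and columns (indexed by $\alpha$, $1\le|\alpha|\le n$) by degree, the entry $\partial^{\alpha-\beta}f/(\alpha-\beta)!$ vanishes unless $|\alpha|>|\beta|$; hence the degree-$\le n-1$ columns vanish on the degree-$(n-1)$ rows, and on those rows the only nonzero entries, sitting in the degree-$n$ columns, are first partials. This gives the block-triangular form
$$\Jac_n(f)=\begin{pmatrix}\Jac_{n-1}(f) & B\\ 0 & C\end{pmatrix},$$
where $C$ (degree-$(n-1)$ rows, degree-$n$ columns) has first-partial entries and $B$ collects the higher partials in the degree-$\le n-2$ rows. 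Throughout I use that $\operatorname{mt}(f)\ge2$ forces $\mathcal{J}_1(f)\subseteq\mathfrak{m}$, and more generally $\partial^\gamma f\in\mathfrak{m}$ whenever $|\gamma|<\operatorname{mt}(f)$.

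For part (i) I would expand a maximal minor of $\Jac_n(f)$ by the generalized Laplace rule along the $p_2=\binom{d+n-2}{d-1}$ bottom rows. Since the bottom-left block is zero, the minor becomes a sum of products of a $p_2\times p_2$ minor of $C$ (a determinant in first partials, so an element of $\mathcal{J}_1(f)^{p_2}$) with a complementary maximal minor of the top block $(\Jac_{n-1}(f)\mid B)$. The remaining task is to show these top minors lie in $\mathcal{J}_{n-1}(f)$ \emph{even when they use the extra degree-$n$ columns} $B$. Block-triangularity alone does not give this — for an abstract matrix of this shape one can produce minors involving only the $B,C$ blocks that escape the minor ideal of the top-left block — so one must use the precise Leibniz/Taylor relations tying the entries $\partial^{\alpha'-\beta}f/(\alpha'-\beta)!$ of $B$ to those of $\Jac_{n-1}(f)$ in order to reduce the degree-$n$ columns against the lower-order ones inside the minor ideal. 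I expect this column-reduction to be the \textbf{main obstacle}; granting it, part (i) yields the descending chain $\mathcal{J}_n(f)\subseteq\mathcal{J}_{n-1}(f)\subseteq\cdots\subseteq\mathcal{J}_2(f)$.

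For part (ii) I would combine Proposition \ref{inclusion} with $\mathcal{J}_1(f)\subseteq\mathfrak{m}$. Whenever $\binom{d-2+n}{d-1}\ge3$ — which holds for $d\ge3$ and any $n\ge2$, and for $d=2,\ n\ge3$ — Proposition \ref{inclusion} gives $\mathcal{J}_n(f)\subseteq\mathcal{J}_1(f)^3=\mathcal{J}_1(f)\cdot\mathcal{J}_1(f)^2\subseteq\mathfrak{m}\,\mathcal{J}_1(f)^2$. Under the hypotheses of (ii) the only residual case is $(d,n)=(2,2)$ with $\operatorname{mt}(f)\ge3$ (the case $d=1$ reducing to the direct identity $\mathcal{J}_n(f)=(f')^n$). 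Here I would compute the finitely many $3\times3$ minors of the explicit $3\times5$ matrix $\Jac_2(f)$: each is a sum of monomials in the partials carrying at least two first-order factors (so lying in $\mathcal{J}_1(f)^2$) and one further factor that is either a third first partial or a second partial; since $\operatorname{mt}(f)\ge3$ puts second partials in $\mathfrak{m}$, every such minor lands in $\mathfrak{m}\,\mathcal{J}_1(f)^2$.

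For part (iii) I would first use (i) to reduce to $n=2$: as $\mathcal{J}_n(f)\subseteq\mathcal{J}_2(f)$, it suffices to prove $\mathcal{J}_2(f)\subseteq (f)+\mathfrak{m}\,\mathcal{J}_1(f)^2$ for every $d$ and every $\operatorname{mt}(f)\ge2$. For $d\ge3$ this is already (ii), and for $d=1$ one checks $\mathcal{J}_2(f)=(f'^2)\subseteq(f)$ directly. The substantive case is $d=2,\ \operatorname{mt}(f)=2$, where the minor analysis of $\Jac_2(f)$ shows that every maximal minor lies in $\mathcal{J}_1(f)^3\subseteq\mathfrak{m}\,\mathcal{J}_1(f)^2$ except the single Hessian–gradient minor
$$D=\tfrac12\bigl((\partial_1^2 f)(\partial_2 f)^2-2(\partial_1\partial_2 f)(\partial_1 f)(\partial_2 f)+(\partial_2^2 f)(\partial_1 f)^2\bigr),$$
so it remains to place $D$ in $(f)+\mathfrak{m}\,\mathcal{J}_1(f)^2$. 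Replacing each second partial by its constant term costs only an element of $\mathfrak{m}\,\mathcal{J}_1(f)^2$, reducing $D$ to the constant-Hessian expression $\tfrac12\,(\nabla f)^{t}\,\mathrm{adj}(H_0)\,\nabla f$, where $H_0$ is the Hessian of the quadratic part $Q$ of $f$; using $\nabla f=H_0\mathbf{x}+O(\mathfrak{m}^2)$ and $H_0\,\mathrm{adj}(H_0)=\det(H_0)\,\Id$ this equals $\det(H_0)\,Q$ modulo $\mathfrak{m}^3$, and $\det(H_0)\,Q\equiv\det(H_0)\,f \bmod \mathfrak{m}^3$ since $Q\equiv f\bmod\mathfrak{m}^3$. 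When $Q$ is nondegenerate one has $\mathcal{J}_1(f)=\mathfrak{m}$, so $\mathfrak{m}^3=\mathfrak{m}\,\mathcal{J}_1(f)^2$ and $D\in(f)+\mathfrak{m}\,\mathcal{J}_1(f)^2$ at once; when $Q$ is degenerate the leading term drops out and one finishes by absorbing the higher-order remainder into $(f)$ via $f\equiv Q\bmod\mathfrak{m}^3$ (for instance trading high powers in the degenerate direction against $f$, as in $x_2^{m}=x_2^{m-3}f-\dots$). Making this last absorption uniform in the characteristic and for degenerate $Q$ is the second delicate point of the argument.
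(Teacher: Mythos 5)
Your part (ii) is correct and essentially coincides with the paper's argument: Proposition \ref{inclusion} handles $d\geq 3$ or $n\geq 3$, and for the residual case $(d,n)=(2,2)$ with $\operatorname{mt}(f)\geq 3$ the explicit generators of $\mathcal{J}_2(f)$ (four cubes in $f_x,f_y$ plus the single Hessian--gradient minor) give the inclusion. But the proposal as a whole does not prove the lemma: both points you flag as ``delicate'' are genuine gaps, and they are exactly where the content lies. For (i), the Laplace expansion along the degree-$(n-1)$ rows only reduces the problem to showing that maximal minors of the augmented matrix $(\Jac_{n-1}(f)\mid B)$ --- including those using degree-$n$ columns --- lie in $\mathcal{J}_{n-1}(f)$; as you yourself note, block-triangularity gives nothing here, and you then simply ``grant'' the needed column reduction instead of carrying it out. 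Since you also use (i) to reduce (iii) to the case $n=2$, this gap propagates through the rest of your argument.

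For (iii), in the substantive case $d=2$, $\operatorname{mt}(f)=n=2$, your Taylor/Hessian analysis of the critical minor $D$ leaves an error term that you only control inside $\mathfrak{m}^3$, and $\mathfrak{m}^3\not\subseteq (f)+\mathfrak{m}\mathcal{J}_1(f)^2$ in general: for $f=x^2$ the right-hand side equals $(x^2)$, which does not contain $y^3$. So when the quadratic part $Q$ is degenerate, ``absorbing the remainder into $(f)$'' is not a routine manipulation but the actual difficulty, and your sketch ($x_2^m=x_2^{m-3}f-\dots$) does not resolve it. In addition, your formulas divide by $2$ and rely on the symmetric-Hessian formalism ($\tfrac12(\nabla f)^t\mathrm{adj}(H_0)\nabla f$, $\nabla f=H_0\mathbf{x}+\dots$), which breaks down in characteristic $2$, where the matrix entries are Hasse derivatives and a form such as $Q=xy$ has no such description; the characteristic-$2$ degenerate case is untouched. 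The paper avoids all of this by a different route: since both ideals in (iii) transform by the automorphism under contact equivalence (Lemmas \ref{local-lem1} and \ref{local-lem3}, plus the analogous computation for $(f)+\mathfrak{m}\mathcal{J}_1(f)^2$), it classifies $f$ with $\operatorname{mt}(f)=2$ into normal forms ($x^2$, $ax^2+y^2$, $ax^2+y^k$ when $\mathrm{char}(\Bbbk)\neq 2$; $xy$, $x^2+h$ with $\operatorname{mt}(h)\geq 3$ when $\mathrm{char}(\Bbbk)=2$) and verifies the inclusion for each by direct computation, the case $x^2+h$ being handled by part (ii) applied to $h$. If you want a classification-free proof, the degenerate and characteristic-$2$ cases are precisely where you still have to do the work.
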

\begin{proof}
Part (i) follows from Proposition \ref{relmatrix} and a direct calculation. Part (ii) is due to Proposition \ref{inclusion} in case either $d\geq 3$ or $n \geq 3$. Assume that $\operatorname{mt}(f)\geq 3$, $d=n=2$. Considering $f$ as an element in $\Bbbk[\![x,y]\!]$, one can see that
$$\mathcal{J}_2(f)=\left(f_x^3,f_x^2f_y,f_xf_y^2,f_y^3, f_{xx}f_y^2-2f_{xy}f_xf_y+f_x^2f_{yy}\right)$$
respectively. Hence $\mathcal{J}_n(f)\subseteq\mathcal{J}_2(f)\subseteq\mathfrak{m}\mathcal{J}_1(f)^2$, which completes (ii).

To prove (iii) it remains to consider the case when $d\leq 2$ and $\operatorname{mt}(f)=n=2$. If $d=1$ then as $\operatorname{mt}(f)= 2$ one has $\mathcal{J}_2(f)=(f_x^2)\subseteq (f)$ and hence
$$(f)+\mathcal{J}_{2}(f)=(f)\subseteq (f)+\mathfrak{m}\mathcal{J}_1(f)^2.$$
Suppose that $d=2$ and $f\in \Bbbk[\![x,y]\!]$.  Since $\mathrm{mt}(f)= 2$, we can show that  $f$ is contact equivalent to a plane curve singularities in one of the following forms, whereas $a\in \Bbbk^*$
\begin{itemize}
	\item[(a)] $f_1=x^2,f_2=ax^2+y^2, f_3=ax^2+y^k,\ k\geq 3$ if $\mathrm{char}(\Bbbk)\neq 2$.
	\item[(b)] $f_4=xy,f_5=x^2+h(x,y),\ \mathrm{mt}(h)\geq 3$ if $\mathrm{char}(\Bbbk)=2$.
\end{itemize} 
We may compute the ideal $(f)+\mathcal{J}_2(f)$ as follows
\begin{align*}
(f_1)+\mathcal{J}_2(f_1)&=(x^2), \\
(f_2)+\mathcal{J}_2(f_2)&=(ax^2+y^2,x^3,y^3),\\
(f_3)+\mathcal{J}_2(f_3)&=(ax^2+y^k, x^3,x^2y^{k-2}),\\
(f_4)+\mathcal{J}_2(f_3)&=(x^3,xy,y^3),\\
(f_5)+\mathcal{J}_2(f_3)&=(f_5)+\mathcal{J}_2(h) \\
&\subseteq(f_5)+\mathfrak{m}\mathcal{J}_1(h)^2\text{ by (ii)}\\
&= (f_5)+\mathfrak{m}\mathcal{J}_1(f_5)^2.
\end{align*}
This implies that $(f)+\mathcal{J}_{n}(f)\subseteq (f)+\mathfrak{m}\mathcal{J}_1(f)^2$ for all $n\geq 2$.
\end{proof}
\begin{proof}[Proof of Theorem \ref{main2}]
The implication $(i)\Rightarrow (ii)$ is due to Theorem \ref{main1}, while $(ii)\Rightarrow (iii)$ is trivial. It remains to prove the implication $(iii)\Rightarrow (i)$. We first establish this implication for $R=\Bbbk[\![\mathbf{x}]\!]$. Assume that $\mathcal T_n(f)\cong \mathcal T_n(g)$ for some $n\geq 2$. Then by the lifting lemma (see for instance \cite[Lemma I.1.23]{GLS06}, there exists an automorphism $\phi$ of $R$ such that $$(g)+\mathcal{J}_n(g)=\phi \left((f)+\mathcal{J}_n(f)\right)=(\phi(f))+\mathcal{J}_n(\phi(f)),$$
where the later identity is according to Lemma \ref{local-lem1}. Replacing $f$ by $\phi(f)$ we may assume that\\ $(f)+\mathcal{J}_n(f)=(g)+\mathcal{J}_n(g).$ We may assume, without loss of generality, that $\mathrm{mt}(f)\geq \mathrm{mt}(g)$. Considering the first case that $\mathrm{mt}(f)=1$, one obtains $\mathrm{mt}(g)=1$ and hence $f\sim_c g$. Assume that $\mathrm{mt}(f)\geq 2$, then by Lemma \ref{higherjacobians} one has $(f)+\JI_n(f)\subseteq (f)+\mathfrak{m}\mathcal{J}_1(f)^2$, where $\mathfrak{m}=(\mathbf{x})$ is the maximal ideal of $R$. Since  
$$g\in (f)+\mathcal{J}_n(f)\subseteq (f)+\mathfrak{m}\mathcal{J}_1(f)^2= (f)+\mathfrak{m}\operatorname{j}(f)^2,$$ there exist $u,h\in R$ such that
$$g=u f +h, h\in \mathfrak{m}\operatorname{j}(f)^2.$$
As $h\in  \mathfrak{m}\operatorname{j}(f)^2$, it follows that
$$\mathrm{mt}(h)\geq 1+2(\mathrm{mt}(f)-1)>\mathrm{mt}(f).$$
Then $u$ must be a unit in $R$ since $\mathrm{mt}(g)\leq \mathrm{mt}(f)<\mathrm{mt}(h)$. It yields that 
$$u^{-1}g-f=u^{-1}h\in \mathfrak{m}\operatorname{j}(f)^2.$$
Applying Samuel's theorem (Theorem \ref{samuel}) one obtains an automorphism $s$ of $R$ such that 
\begin{align}\label{formal}
s(f)=u^{-1}g,
\end{align}
which gives $f\sim_c g$.

We now consider the case that $R=\Bbbk\{\mathbf{x}\}$ with $\mathrm{char}(\Bbbk)=0$. Let $F({\bf x,y})$ be a convergent series in $\Bbbk\{\mathbf{x,y}\}$ defined as
$$F({\bf x,y})=y_{d+1}f(y_1,\ldots,y_d)-g(x_1,\ldots,x_d).$$
By (\ref{formal}), $F$ admits a formal solution $\hat y\in \Bbbk[\![\mathbf{x}]\!]^{d+1}$ with 
$$\hat y_i=s(x_i), \forall i=1,\ldots,d\text{ and } \hat y_{d+1}=u(x).$$
Applying Artin's approximation theorem (Theorem \ref{artin}) we obtain a convergent solution $\tilde{y}\in \Bbbk\{\mathbf{x}\}^{d+1}$ of $F$ such that 
$$\hat y_i-\tilde{y}_i\in \mathfrak{m}^2, \forall i.=1,\ldots, d+1$$
Then $\tilde{y}$ induces an automorphism $\tilde{s}$ of $\Bbbk\{\mathbf{x}\}$ defined as $\tilde{s}(x_i)=\tilde{y}_i$, for all $i=1,\ldots,d$ and $\tilde y_{d+1}$ is a unit in $\Bbbk\{\mathbf{x}\}$. Hence $ \tilde{y}_{d+1} \tilde s(f)=g$ and therefore $f\sim_c g$.
\end{proof}
\begin{example}[Mather-Yau, Gaffney-Hauser]\label{firstblowup}

\begin{enumerate}[(a)]
\item Let $p=\mathrm{char}(\Bbbk)>0$. Take $f(x,y)=x^{p+1}+ y^{p+1}$ and $g=f+x^{p}$, then 
$$f_x=g_x=x^{p}, f_y=g_y=y^{p}$$
and therefore $\mathcal T_1(f)=\mathcal T_1(g)$ but $f \not\sim_c g$.
\item  Let $\Bbbk=\Bbb R$. Take $f(x,y)=x^{2}+ y^{2}$ and $g=x^{2}- y^{2}$ in $\Bbb R[\![x,y]\!]$. Then $\operatorname{j}(f)=\operatorname{j}(g)=(x,y)$ and therefore 
$$\mathcal T_1(f)=\mathcal T_1(g)\cong \Bbb R,$$ but it is obvious that $f \not\sim_c g$.
\end{enumerate}
\end{example}

\subsection*{Acknowledgment.}  A part of this work was done while the author were visiting  Vietnam Institute for Advanced Study in Mathematics (VIASM) and Hanoi Institute of Mathematics, VAST in 2024. The author would like to thank these institutes for hospitality and support.% We are grateful to Alessandro De Stefani, Linquan Ma, and Ilya Smirnov for many useful comments on a previous draft. The second named author (HDN) was supported by the National Foundation for Science and Technology Development (NAFOSTED) under the grant number 101.04-2023.30. The third named author (PHQ) was supported by the NAFOSTED under the grant number 101.04-2023.08.

%----------------------------------

%\selectlanguage{english}


\begin{thebibliography}{9999}

\bibitem{Art68} M. Artin, {\it On the solutions of analytic equations}, Invent. Math., {\bf 5} (1968),
277-291.

\bibitem{BD20}	
{P. Barajas, D. Duarte}, {\it On the module of differentials of order $n$ of hypersurfaces}, {J. Pure Appl. Algebra} {\bf 224} (2020), 536--550.

%\bibitem{BDMV14}	
%{E. Bierstone, S. Da Silva, P.D. Milman, F. Vera Pacheco}, {\it Desingularization by blowings-up avoiding simple normal crossings}, Proceedings of the American Mathematical Society 142, no. 12 (December 2014): 4099--4111.

\bibitem{BJNB19}
{H. Brenner, J. Jeffries, L. N\'u\~nez-Betancourt}, {\it Quantifying singularities with differential operators}, {Adv. Math.} {\bf 358} (2019), 106843, 89 pp.
		
%\bibitem{CS93}{S. D. Cutkosky, H. Srinivasan}, {\it An Intrinsic Criterion for Isomorphism of Singularities}, {American Journal of Mathematics} {\bf 4} (1993), 789--821.

%\bibitem{dAD21} {H. de Alba, D. Duarte}, {\it On the $k$-torsion of the module of differentials of order $n$ of hypersurfaces}, {J. Pure Appl. Algebra} {\bf 225} (2021), Paper No. 106646, 7 pp.

%\bibitem{Dua14} {D. Duarte}, {\it Higher Nash blowup on normal toric varieties}, {J. Algebra} {\bf 418} (2014), 110--128.

\bibitem{Dua17}
{D. Duarte}, {\it Computational aspects of the higher Nash blowup of hypersurfaces}, {J. Algebra} {\bf 477} (2017), 211--230.

%\bibitem{DNB21}{D. Duarte, L. N\'u\~nez-Betancourt}, {\it Higher Nash blowups of normal toric varieties in prime characteristic}, {Tohoku Math. J. (2)} {\bf 73} (2021), 449--462.

\bibitem{DNB22}
{D. Duarte, L. N\'u\~nez-Betancourt}, {\it Nash blowups in prime characteristic}, {Rev. Mat. Iberoam.} {\bf 38} (2022), 257--267. 

\bibitem{GH85} T. Gaffney, H. Hauser, {\it Characterizing singularities of varieties and of mappings}, Invent. Math. {\bf 81} (1985), no. 3, 427--447.

\bibitem{GLS06} G.-M. Greuel, C. Lossen, and E. Shustin,  Introduction to Singularities and Deformations,
Math. Monographs, Springer-Verlag (2006), 476 pages.

\bibitem{GrP17} G.-M. Greuel; T. H. Pham, {\it Mather-Yau theorem in positive characteristic}, J. Algebraic Geom. 26 (2017), 347--355.

%\bibitem{Gro67}{A. Grothendieck}, {\it \'El\'ements de g\'eometrie alg\'ebrique IV, Quatri\`eme partie, \'Etude locale des sch\'emas et des morphismes des sch\'emas}, {Publ. Math. Inst. Hautes \'Etudes Sci.} {\bf 32} (1967).

%\bibitem{Hir64}
%{H. Hironaka}, {\it Resolution of singularities of an algebraic variety over a field of characteristic zero}, {Ann. of Math.} {\bf 79} (1964), 109--326.

\bibitem{HMYZ23}
{N. Hussain, G. Ma, S. S.-T. Yau, H. Zuo}, {\it Higher Nash blowup local algebras of singularities and its derivation Lie algebras}, {J. Algebra} {\bf 618} (2023), 165--194.
 
%\bibitem{Kun86} {E. Kunz}, {\it K\"ahler Differentials}, {Adv. Lectures Math.}, {Vieweg}, {Braunschweig}, 1986.
\bibitem{Ker22} D. Kerner, {\it  Unfoldings of maps, the first results on stable maps, and results of Mather-Yau/Gaffney-Hauser type in arbitrary characteristic}, preprint (2023)

\bibitem{LY24}
{Q.T. Le, T. Yasuda},  {\it Higher Jacobian ideals, contact equivalence and motivic zeta functions}, Rev. Mat. Iberoam. (2024)

\bibitem{MaY82} J. N. Mather and S. S.-T. Yau, {\it Classification of isolated hypersurface singularities by their moduli algebras,}Invent. Math. 69 (1982), no. 2, 243--251.

%\bibitem{Nak70}
%{Y. Nakai}, {\it High order derivations I}, {Osaka J. Math.} {\bf 7} (1970) 1--27.

%\bibitem{Nob75}
%{A. Nobile}, {\it Some properties of the Nash blowing-up}, {Pacific J. Math.} {\bf 60} (1975), 297--305.

%\bibitem{OZ91} {A. Oneto, E. Zatini}, {\it Remarks on Nash blowing-up}, {Rend. Sem. Mat. Univ. Politec. Torino} {\bf 49} (1991), 71--82.

%\bibitem{Ser55}
%{J.P. Serre} {\it Faisceaux alg\'ebriques coh\'eerents}, {Ann. of Math.} {\bf 61} (1955), 197--278.

\bibitem{S56} 
P. Samuel,  {\it Alg\'ebricit\'e de certains points singuliers alg\'ebro\"\i des}, 
J. Math. Pures Appl. (9) {\bf 35} (1956), 1--6.
		

%\bibitem{TY19} {R. Toh-Yama}, {\it Higher Nash blowups of the $A_3$-singularity}, {Comm. Algebra} {\bf 47} (2019), 4541--4564.

%\bibitem{Vil06} {O. Villamayor}, {\it On flattening of coherent sheaves and of projective morphisms}, {J. Algebra} {\bf 295} (2006), 119--140.

\bibitem{Yas07}
{T. Yasuda}, {\it Higher Nash blowups}, {Compos. Math.} {\bf 143} (2007), 1493--1510.

%\bibitem{Yas09} {T. Yasuda}, {\it Flag higher Nash blowups}, {Comm. Algebra} {\bf 37} (2009), 1001--1015.


\end{thebibliography}
\end{document}